\DeclareFontFamily{OMX}{lmex}{}
\DeclareFontShape{OMX}{lmex}{m}{n}{<->lmex10}{}
\newcommand{\enstq}[2]{\left\lbrace#1\mathrel{}\middle|\mathrel{}#2\right\rbrace} 
\newcommand{\ensemblenombre}[1]{\mathbb{#1}} 
\newcommand{\N}{\ensemblenombre{N}}
\newcommand{\Z}{\ensemblenombre{Z}}
\newcommand{\R}{\ensemblenombre{R}}
\newcommand{\C}{\ensemblenombre{C}}
\newcommand{\T}{\ensemblenombre{T}}
\newcommand{\D}{\ensemblenombre{D}}
\newcommand{\Hol}{\operatorname{Hol}}
\newcommand{\Ran}{\operatorname{Ran}}
\newcommand{\supp}{\operatorname{supp}}
\newcommand{\abs}[1]{\left\lvert#1\right\rvert} 
\newcommand{\fonction}[5]{\begin{array}{ccccc}
#1& : & #2 & \longrightarrow & #3 \\
 & & #4 & \longmapsto & \displaystyle{#5}
\end{array}}
\newcommand{\bigint}{\@ifnextchar_\@bigintsub\@bigintnosub}
\def\@bigintsub_#1{\def\@int@subscript{#1}\@ifnextchar^\@bigintsubsup\@bigintsubnosup}
\def\@bigintsubsup^#1{\mathop{\text{\large$\int_{\text{\normalsize$\scriptstyle\@int@subscript$}}^{\text{\normalsize$\scriptstyle#1$}}$}}\nolimits}
\def\@bigintsubnosup{\mathop{\text{\large$\int_{\text{\normalsize$\scriptstyle\@int@subscript$}}$}}\nolimits}
\def\@bigintnosub{\@ifnextchar^\@bigintnosubsup\@bigintnosubnosup}
\def\@bigintnosubsup^#1{\mathop{\text{\large$\int^{\text{\normalsize$\scriptstyle#1$}}$}}\nolimits}
\def\@bigintnosubnosup{\mathop{\text{\large$\int$}}\nolimits}
\newtheorem{thm}{Theorem}[section]
\newtheorem{prop}[thm]{Proposition}
\newtheorem{cor}[thm]{Corollaire}
\newtheorem{lemme}[thm]{Lemma}
\theoremstyle{definition} 
\newtheorem{rmk}[thm]{Remark}
\newtheorem{ex}[thm]{Exemples}
\newcounter{qu}
\newcounter{qq}
\newcounter{ex}
\title{Reachable states and holomorphic function spaces for the 1-D heat equation}
\author{Marcu-Antone \bsc{Orsoni}}
\date{\today}
\makeatletter \@removefromreset{figure}{subsection}\makeatother
\makeatletter \@removefromreset{figure}{section}\makeatother
\begin{document}

\maketitle

\fancyfoot{}                 
\lhead{\leftmark}      
\chead{}                      
\rhead{\thepage }        
\abstract{The description of the reachable states of the heat equation is one of the central 
questions in control theory. The aim of this work is to present new results for the 1-D heat equation with boundary control on the segment $[0, \pi]$. In this situation it is known that the reachable states are holomorphic in a square $D$ the diagonal of which is given by $[0,\pi]$.
The most precise results obtained recently say that the reachable space
is contained between two well known spaces of analytic function: the Smirnov space $E^2(D)$ and the Bergman space $A^2(D)$.
We show that the reachable states are exactly the sum of two Bergman spaces on sectors the intersection of which is $D$. In order to get a more precise information on this sum of Bergman spaces, we also prove that it includes the Smirnov-Zygmund space $E_{L\log^+\!L}(D)$ as well as a certain weighted Bergman space on $D$.
}

\section{Introduction}
\subsection{Problem setting}
\noindent
Denote by $X=W^{-1,2}(0, \pi)$, the dual of the Sobolev space $W^{1,2}_0(0, \pi)$.
We consider the following boundary control problem of the heat equation.
\begin{equation}
\label{HE}
\tag{HE}
\left\lbrace
	\begin{aligned}
		&\frac{\partial y}{\partial t}(t,x)-		\frac{\partial^2 y}{\partial x^2} = 0  \qquad &t >0, \ x\in (0,\pi),& \\
		&y(t,0)=u_0(t),\  \ y(t,\pi)=u_\pi(t) \qquad & t >0, &\\
		&y(0,x)= f(x) \qquad &x \in (0, \pi),
	\end{aligned}
\right.
\end{equation}
For any $u\!:=\! (u_0, u_\pi) \in L^2_{\mathrm{loc}}((0, +\infty), \C^2)$ --- the so-called input (or control) function --- and $f \in X$, this equation admits a unique solution $y \in C\left((0, +\infty), X\right)$ (see \cite[Prop. 10.7.3]{TW}) defined by 
\begin{equation}
\label{3}
\forall t >0, \ y(t, \cdot)= \T_tf + \Phi_t u
\end{equation}
where $(\T_t)_{t\geq 0}$ is the Dirichlet Laplacian semigroup and $\Phi_t$ is the controllabililty operator (see \cite[Prop. 4.2.5]{TW}). For $f \in X$ and $\tau >0$, we will say that $g \in X$ is reachable from $f$ in time $\tau$ if there exists a boundary control $u \in L^2((0,\tau), \C^2)$ such that the solution of $\eqref{HE}$ satisfies $y(\tau, \cdot)=g$. We denote by $\mathcal{R}^f_\tau$ the set of all reachable fonctions from $f$ in time $\tau$. 
Because of the smoothing effect of the heat kernel, it is clear that for arbitrary control $u \in L^2((0,\tau), \C^2)$, we cannot reach any non-regular functions. So, $\mathcal{R}^f_\tau \subsetneq X$. In other words, the equation \eqref{HE} is not exactly controllable for any time $\tau >0$.
It is thus natural to seek for more precise information on $\mathcal{R}^f_\tau$.

First of all, we remind that this set has some invariance properties. Indeed, \cite{Eg} and \cite{FR} have shown that the heat equation \eqref{HE} is null-controllable in any time (see \cite{LR}, \cite{FI} or \cite[Prop. 11.5.4]{TW} for the $n$-dimensional case). This means that : $\forall f \in X, \forall \tau>0, \  0 \in \mathcal{R}^f_\tau$. 
It is clear from \eqref{3} that this latter condition is equivalent to $\Ran \T_\tau \subset \Ran \Phi_\tau $. So using \eqref{3} again we obtain $\mathcal{R}^f_\tau= \Ran \Phi_\tau $, which means that $\mathcal{R}^f_\tau$ does not depend on the initial condition $f \in X$. Thus, we can take $f=0$ and write $\mathcal{R}_\tau=\mathcal{R}_\tau^0$. Moreover, since $\Phi_\tau \in \mathcal{L}(L^2([0,\tau], \C^2),X)$, $\mathcal{R}_\tau$ is a linear space, named \emph{reachable space} of \eqref{HE}.
Finally, the null-controllability in any time $\tau >0$ implies also that this space does not depend on time $\tau >0$ (see \cite{Fa}, \cite{Se}, or \cite[Rmk 1.1]{HKT}). Note that these two invariance properties hold for every linear control system wich satisfies the null-controllability in any positive time. 

\subsection{Notations}
\label{subsection 3}
In the rest of the paper, we denote by $\D=\enstq{z \in \C}{|z| < 1}$ the unit disc and by $\C^+=\enstq{z \in \C}{\mathrm{Im}z>0}$ the upper-half plane. 
Let $\Omega$ be a simply connected domain in the complex plane with at least two boundary points. Write $\Hol(\Omega)$ for the algebra of holomorphic functions on $\Omega$. We say that $f \in \Hol(\Omega)$  belongs to the Hardy space $H^p(\Omega)$ $(0 < p < + \infty)$ if the subharmonic function $|f|^p$ admits a harmonic majorant on $\Omega$. We say that $f\in \Hol(\Omega)$ belongs to the Smirnov space $E^p\left(\Omega \right)$ $(0 < p < + \infty)$ if there exists a sequence $(\gamma_n)_{n \in \N}$ of rectifiable Jordan curves eventually surrounding each compact subdomain of $\Omega$ such that 
$$
 \|f\|_{E^p}^p=\sup_{n} \int_{\gamma_n} |f(z)|^p |dz| < \infty.
$$ 
For $\Omega = \D$, it is well-known that these two last spaces coincide, and we will simply denote them by $H^p$. For an arbitrary conformal mapping $\varphi : \mathbb{D} \to \Omega$, $f \in H^p(\Omega)$ if and only if $f \circ \varphi \in H^p$, and $f \in E^p\left(\Omega \right)$ if and only if $(f \circ \varphi) \varphi^{1/p} \in H^p$. When $\Omega = \C^+$, the space $E^p(\Omega)$ consists of all the functions holomorphic on $\C^+$ such that 
$$
\|f\|_p^p=\sup_{y>0} \int_\R |f(x+iy)|^p dx < \infty. 
$$
This space is often called the Hardy space of the upper-half plane (\cite{Ga}, \cite{Ni1}, \cite{Le}, \cite{Ko}), and denoted by $H^p(\C^+)$.
Assume now that $\Omega$ is a domain bounded by a rectifiable Jordan curve $\gamma$. In this case, each $f \in E^p(\Omega)$ ($1 \leq p < \infty$) admits a non-tangential limit almost everywhere on $\gamma$ (denoted again by $f$) which belongs to $L^p(\partial \Omega)$, and satisfies the Cauchy formula 
$$ \forall z \in \Omega, \ f(z)=\frac{1}{2i\pi} \int_{\gamma} \frac{f(u)}{u-z} du.$$
With this in mind, we will say that $f \in \Hol(\Omega)$ belongs to the Smirnov-Zygmund space $E_{L\log^{+}\!L}\left(\Omega\right)$ if $f \in E^1(\Omega)$ and its non-tangential limit on $\gamma$ belongs to $L\log^{+}\!L(\partial \Omega)$, that is $\int_{\gamma} |f(z)| \log^{+}(|f(z)|) |dz| < + \infty$.
Therefore, the following inclusions are clear 
$$\forall 1 < p < +\infty, \quad E^p(\Omega) \subset E_{L \log^+  \!\! L}(\Omega) \subset E^1(\Omega). $$
For more details on the Hardy space and the Smirnov space, we refer to \cite[Chap. 9 et 10]{Du}. For the cases of the disc and the upper-half plane, see also \cite{Ru},\cite{Ga}, \cite{Ni1}, \cite{Le}, \cite{Ko}. 
 
Finally, the weighted Bergman space $A^p\left(\Omega, \omega \right)$, where $\omega$ is a non negative mesurable function on $\Omega$, consists of all functions $f \in \Hol(\Omega)$ such that $$
\|f\|_{A^p(\Omega,\omega)}^p=\int_{\Omega} |f(x+iy)|^p \omega(x+iy)dxdy < +\infty. 
$$
When $\omega =1$, $A^p(\Omega,\omega)$ is the classical Bergman space which we simply denote $A^p\left(\Omega \right)$.  

\subsection{Scientific context}
It seems that the first work on this problem is in \cite{FR}. Using a moment method, Fattorini and Russel showed that if there exists $A >0$ and $B>0$ such that
\begin{equation}
\label{coef}
\forall n \in \N^*, \ |a_n|\leq A\exp(-(\pi+B)n)
\end{equation}
then the function defined by $g(x)=\sum_{n=1}^{+\infty} a_n \sin(nx)$ is reachable.
For $\delta >0$, denote by $H_\delta$ the space of continuous functions which are $\pi$-periodic on $\R$, which extend holomorphically on the strip $|\mathrm{Im}z|< \delta$ and whose derivatives of even orders vanish in $0$ and in $\pi$. 
Adjusting a discrete Paley-Wiener theorem\cite[Chap.IV, sect.V, Thm V.1 vi), p. 98]{QZ} to the orthonormal basis $(\sin(nx))_{n \geq 1}$, we obtain from \eqref{coef} that for $\delta$ large enough, $H_\delta$ is included in $\Ran \Phi$. 

Later, Martin, Rosier and Rouchon improved this result in \cite[Thm 1.1]{MRR}. On the one hand, they showed that the holomorphic functions on the disk 
$$
B=\enstq{z \in \C}{\abs{z-\frac{\pi}{2}} < \frac{\pi}{2} e^{(2e)^{-1}}}$$
are reachable. On the other hand, they proved that the reachable functions extend holomorphically to the square (see Figure \ref{figure1})
$$D=\enstq{z=x+iy \in \C}{\abs{x-\frac{\pi}{2}} + \abs{y} < \frac{\pi}{2}}. $$
To summarize, we have $\Hol(B) \subset \mathcal{R}_\tau \subset \Hol(D)$. 
Dardé and Ervedoza improved again this latter result in \cite[Thm 1.1]{DE} showing that all the functions which are holomorphic on a neighborhood of $D$ are reachable. This result combined with the previous result means that $\Ran \Phi_\tau$ is a space of holomorphic functions on $D$. 

Finally, the best known result on this problem to our knowledge is given in \cite{HKT}, where the authors proved  that the reachable space is sandwiched between two Hilbert spaces of holomorphic functions on the square $D$. More explicitly, it satisfies the inclusions 
\begin{equation}\label{HKT-incl}
 E^2(D) \subsetneq \Ran \Phi_\tau \subset A^2(D)
\end{equation}
(see the previous subsection for the definitions).
Key tools used in that paper include a unitary Laplace type integral operator studied by Aikawa, Hayashi and Saitoh \cite{AHS}, as well as
a Riesz basis of exponentials in $E^2(D)$ discussed by Levin and Lyubarskii \cite{LL}. 
The idea of our paper is to avoid the Riesz basis of exponentials and to use complex
analysis tools like Cauchy formula, Hilbert transform and $\overline{\partial}$-methods
which will allow us to improve significantly --- in terms of function spaces of complex analysis --- the lower bound $E^2(D)$.

\subsection{Main results}
Let $\Delta = \enstq{z \in \C}{|\arg(z)| < \frac{\pi}{4}}$. 
The first central result of this paper is the following explicit characterization of the reachable space.
\begin{thm}
\label{thm2}
We have $\Ran \Phi_\tau = A^2(\Delta) + A^2(\pi - \Delta)$.
\end{thm}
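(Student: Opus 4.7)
My first move would be to decouple the two boundary controls. Linearity of $\Phi_\tau$ gives
\[
\Ran \Phi_\tau \;=\; \Ran \Phi_\tau^{(0)} + \Ran \Phi_\tau^{(\pi)},
\]
where $\Phi_\tau^{(0)}$ (resp.~$\Phi_\tau^{(\pi)}$) is the single-endpoint operator with $u_\pi\equiv 0$ (resp.~$u_0\equiv 0$). The involution $x\mapsto \pi-x$ is a symmetry of \eqref{HE} that intertwines these two operators and maps the sector $\Delta$ onto $\pi-\Delta$. The theorem therefore reduces to showing
\[
\Ran \Phi_\tau^{(0)} \;=\; A^2(\Delta),
\]
read through the (injective) restriction map $A^2(\Delta)\to X$.

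\textbf{Key representation.} For a time-reversed control $v(s):=u_0(\tau-s)\in L^2(0,\tau)$, the Dirichlet sine coefficients of the reached state are $a_n=-\tfrac{2n}{\pi}\int_0^{\tau} e^{-n^2 s}v(s)\,ds$. Inserting this in the formal series and applying Jacobi's theta identity to $\sum_{n\ge 1} n\sin(nz)e^{-n^2 s}$ yields a reflected-Gaussian integral representation
\[
\widetilde g(z) \;=\; -\frac{1}{2\sqrt{\pi}}\int_0^{\tau}\frac{v(s)}{s^{3/2}}\sum_{k\in\Z}(z+2\pi k)\,e^{-(z+2\pi k)^2/(4s)}\,ds,
\]
whose $k=0$ summand extends holomorphically to the whole sector $\Delta$ (because $\mathrm{Re}(z^2)>0$ there) and whose $k\ne 0$ summands extend to translates of $\Delta$. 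The conformal map $z\mapsto w=z^2$ sends $\Delta$ onto the right half-plane and identifies $A^2(\Delta)$ with the weighted Bergman space $A^2\!\left(\{\mathrm{Re}\,w>0\},\,dA(w)/(4|w|)\right)$. After the change of variable $\sigma=1/(4s)$, the $k=0$ contribution divided by $\sqrt{w}$ becomes a truncated Laplace transform of $\sigma\mapsto v(1/(4\sigma))/\sqrt{\sigma}$ supported in $[1/(4\tau),+\infty)$.

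\textbf{Inversion.} A Paley--Wiener-type isomorphism of the Aikawa--Hayashi--Saitoh flavor identifies the range of such Laplace transforms with $A^2$ on the corresponding weighted half-plane; pulling back by the conformal map produces a unitary identification between an $L^2$ space of controls and $A^2(\Delta)$. Surjectivity of the truncated Laplace transform, i.e.~from $L^2(1/(4\tau),\infty)$, reduces to a density statement which may be argued by a Cauchy-type integral representation, a Hilbert-transform argument on the half-plane, or a $\overline\partial$-solution with weights — consistent with the author's stated intention of avoiding the Riesz basis of exponentials of \cite{LL}. The null-controllability of \eqref{HE} in arbitrarily small time also enters here as it allows the truncation parameter $\tau$ to be absorbed without shrinking the closed range.

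\textbf{Main obstacle.} The central difficulty will be to handle the Poisson remainder $\sum_{k\ne 0}$. Each remaining term is holomorphic on a translated sector; crucially, the $k=-1$ term is holomorphic on $2\pi-\Delta\supset \pi-\Delta$ and, by the reflective structure of the Dirichlet kernel, matches precisely the sort of contribution that the right-endpoint control problem would generate. One must therefore show that the remainders can be re-allocated between $A^2(\Delta)$ and $A^2(\pi-\Delta)$ without changing the restriction to $(0,\pi)$. Abstractly this is a Cousin-type decomposition for the covering $\{\Delta,\pi-\Delta\}$ of $\Delta\cup(\pi-\Delta)$, and solving it cleanly — with uniform control at the four sector apices $0$ and $\pi$, where $A^2$ functions may concentrate their mass — is where the bulk of the analytic work will lie. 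Producing explicit controls realising a prescribed $f\in A^2(\Delta)$, as opposed to merely proving the inclusion, will require combining this Cousin decomposition with the inverse of the Aikawa--Hayashi--Saitoh map.
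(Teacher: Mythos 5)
Your overall framework (Gaussian kernel via Poisson summation, the conformal map $z\mapsto z^2$ sending $\Delta$ to a half-plane, a Laplace transform and a Paley--Wiener-type theorem for Bergman spaces) is indeed the skeleton of the paper's proof. However, there are two genuine gaps.

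First, your opening reduction to $\Ran\Phi_\tau^{(0)}=A^2(\Delta)$ is not justified and is in fact not what happens. The single-endpoint operator splits as $\Phi_\tau(u_0,0)=\widetilde{\Phi}_\tau u_0+R_{0,\tau}u_0$, and the remainder $R_{0,\tau}u_0$ takes values in $A^2(\Delta,\omega_0)+A^2(\pi-\Delta,\omega_\pi)$, i.e.\ it already mixes the two sectors; so $\Ran\Phi_\tau^{(0)}$ need not sit inside $A^2(\Delta)$, and the theorem only requires the \emph{sum} of the two single-endpoint ranges to equal the sum of the two Bergman spaces. More importantly, your "Inversion" step asserts that surjectivity of the truncated Laplace transform from $L^2(1/(4\tau),\infty)$ "reduces to a density statement". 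It does not: the map $\mathcal{L}$ is \emph{unitary} from $L^2(\R_+,dt/t)$ onto $A^2(\C_+)$, so the image of the closed subspace $L^2((1/(4\tau),\infty),dt/t)$ is a \emph{proper closed} subspace of $A^2(\C_+)$ --- no density argument can close this gap. The paper identifies the orthogonal deficiency $X_0=G\mathcal{L}[L^2((0,1/(4\tau)),dt/t)]$ explicitly and shows it is reachable by an entirely different mechanism: the classical Paley--Wiener theorem places $X_0$ inside a space of \emph{entire} functions, which are reachable by the Dard\'e--Ervedoza theorem. Your appeal to null-controllability "absorbing the truncation parameter" does not substitute for this; time-invariance of the reachable space says nothing about whether the complement $X_0$ is reached.

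Second, your treatment of the Poisson remainder $\sum_{k\ne 0}$ as a Cousin-type re-allocation problem is not needed and would be harder than the actual argument: the paper simply quotes (a mild modification of) Lemma 4.1 of \cite{HKT}, which bounds $R_{0,\tau}$ and $R_{\pi,\tau}$ from $L^2(0,\tau)$ into $A^2(\Delta,\omega_0)+A^2(\pi-\Delta,\omega_\pi)$, giving the forward inclusion at once; and for the reverse inclusion it uses the already-known fact from \cite{HKT} that $A^2(\Delta,\omega_0)+A^2(\pi-\Delta,\omega_\pi)\subset\Ran\Phi_\tau$ to conclude that $R_{0,\tau}u_0\in\Ran\Phi_\tau$, hence $\Ran\widetilde{\Phi}_\tau\subset\Ran\Phi_\tau$. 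The Cousin problem with $L^2$-estimates belongs to the proof of Theorem \ref{thm4}, not here.
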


We mention another characterization which was very recently observed by T.\ Normand (see\cite{Tu}).
Denote by $\omega_0$ and $\omega_\pi$ the weights defined by 
\begin{equation}
\label{2} 
\forall z \in \Delta, \ \omega_0(z)= \frac{e^{\frac{\mathrm{Re}(z^2)}{2\tau}}}{\tau} \qquad \text{and } \qquad  \forall z \in \pi- \Delta, \ \omega_\pi(z)= \omega_0(\pi-z),
\end{equation}
then
\begin{equation}\label{Thomas}
\Ran \Phi_\tau= A^2(\Delta, \omega_0)+ A^2(\pi - \Delta, \omega_\pi),
\end{equation}
independently of $\tau>0$. Note that the inclusion ``$\supset$''  in \eqref{Thomas} was already known in \cite{HKT}.

To prove the Theorem \ref{thm2}, the main idea is to write a certain integral operator as a Laplace type transform and to use a Paley-Wiener type theorem. 
%
%

The central question raised by the above result is the description of the 
sum $A^2(\Delta) + A^2(\pi - \Delta)$. We obviously have
\begin{equation}\label{inclusion1}
A^2(\Delta) + A^2(\pi - \Delta)\subset A^2(D),
\end{equation}
and from the results in \cite{HKT} it also follows that 
$$
E^2(D)\subset A^2(\Delta) + A^2(\pi - \Delta),
$$
but how can we decide in general whether a given holomorphic function on $D$, not necessarily in $E^2(D)$, can be written as 
a sum of two functions in the Bergman spaces on $\Delta$ and $\pi-\Delta$ respectively? 
Note that this is a very natural complex analysis question which can now be discussed completely disconnected from the initial control problem. 
Such a type of problem is related, for instance, to the so-called First Cousin Problem (see \cite[Thm 9.4.1]{AM}), which in our situation turns into a specific Cousin Problem with $L^2$-estimates. We will call this problem the First Cousin Problem for Bergman spaces. Following the proof of the classical First Cousin Problem of \cite{AM} and using Hörmander $L^2$-estimates for the $\bar{\partial}$-equation, we prove the following partial answer to the description of  $A^2(\Delta) + A^2(\pi - \Delta)$.
\begin{thm}
\label{thm4}
Let $z_0=\frac{\pi}{2} + i \frac{\pi}{2}$ and $z_1=\overline{z_0}=\frac{\pi}{2} - i \frac{\pi}{2}$ be the upper  and lower vertices of $D$. Then 
$A^2\left(D, |(z-z_0)(z-z_1)|^{-2}\right) \subset A^2(\Delta) + A^2(\pi - \Delta)$. 
\end{thm}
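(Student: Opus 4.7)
The plan is to follow the classical first Cousin solution on $V := \Delta \cup (\pi-\Delta)$: construct a smooth cutoff $\chi$, use it to split $f|_D$ into two pieces living on the two sectors, and correct the resulting non-holomorphy by a Hörmander $\overline{\partial}$-solution.

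First I would observe that $V$ is simply connected (both $\Delta$ and $\pi-\Delta$ are, and their intersection $D$ is connected) and unbounded, with $0, \pi \in V$ but $z_0, z_1 \in \partial V$, each a ``pac-man'' corner of interior angle $3\pi/2$. Next I would construct $\chi : V \to [0,1]$ smooth with $\supp(\chi) \subset \pi - \Delta$ and $\supp(1-\chi) \subset \Delta$, with $\chi$ vanishing to infinite order on $\partial D \cap \Delta$ (the right edges) and equal to $1$ to infinite order on $\partial D \cap (\pi-\Delta)$ (the left edges). The natural model near each $z_j$ is $\chi(z) = \eta(\arg(z - z_j))$ for a one-dimensional smooth cutoff $\eta$, which yields
\[
|\overline{\partial}\chi(z)| \leq \frac{C}{|z - z_0|} + \frac{C}{|z - z_1|},
\]
with $\overline{\partial}\chi$ supported in $D$. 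Then $f_1^{(0)} := \chi f$ on $D$ extended by $0$ to $\Delta$, and $f_2^{(0)} := (1-\chi)f$ on $D$ extended by $0$ to $\pi-\Delta$, lie in $C^\infty(U_i) \cap L^2(U_i)$ and sum to $f$ on $D$. Their common $\overline{\partial}$-obstruction is the smooth $\overline{\partial}$-closed $(0,1)$-form $\alpha := \overline{\partial}f_1^{(0)} = -\overline{\partial}f_2^{(0)} = f\,\overline{\partial}\chi$ on $D$, zero elsewhere on $V$. Finding $u \in L^2(V)$ with $\overline{\partial}u = \alpha$ would then close the argument: $f_1 := f_1^{(0)} - u|_{\Delta} \in A^2(\Delta)$ and $f_2 := f_2^{(0)} + u|_{\pi-\Delta} \in A^2(\pi-\Delta)$ are holomorphic and satisfy $f = f_1 + f_2$ on $D$.

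The crucial step is to obtain $u$ in \emph{unweighted} $L^2(V)$. For this I would apply Hörmander's one-variable $L^2$-estimate on $V$ with the weight $\varphi := |\psi|^2$, where $\psi : V \to \D$ is a Riemann map. Then $0 \leq \varphi \leq 1$, so $e^{-\varphi}$ is bounded above and below, while $\Delta \varphi = 4|\psi'|^2 > 0$ gives strict subharmonicity on $V$. Hörmander's theorem produces $u$ with
\[
\int_V |u|^2 e^{-\varphi}\, dA \;\leq\; C \int_V \frac{|\alpha|^2}{|\psi'|^2}\, e^{-\varphi}\, dA.
\]
Since $V$ has interior angle $3\pi/2$ at each $z_j$, the Kellogg--Warschawski corner asymptotics give $|\psi'(z)| \asymp |z - z_j|^{-1/3}$ near $z_j$, hence $|\psi'|^{-2} \asymp |z - z_j|^{2/3}$ there. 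On $\overline{D}$, $|z - z_{1-j}|$ is bounded below near $z_j$, so $|z - z_j|^{-2} \lesssim |(z-z_0)(z-z_1)|^{-2}$; combining with the $\overline{\partial}\chi$ estimate,
\[
\int_V \frac{|\alpha|^2}{|\psi'|^2}\, dA \;\lesssim\; \int_D |f|^2 \bigl(|z-z_0|^{-4/3} + |z-z_1|^{-4/3}\bigr)\, dA \;\lesssim\; \int_D \frac{|f(z)|^2}{|(z-z_0)(z-z_1)|^2}\, dA(z) < +\infty
\]
by the hypothesis on $f$. As $e^{-\varphi} \asymp 1$, this gives $u \in L^2(V)$, and hence $L^2$ on each of $\Delta$ and $\pi-\Delta$.

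The main obstacle I anticipate is precisely this unweighted $L^2$-bound on $u$. Standard Hörmander on $\C$ with no weight supplies only a $(1+|z|^2)^{-2}$-weighted estimate, far too weak on the unbounded $V$. The Riemann-map weight $|\psi|^2$ is tailored to be simultaneously bounded and strictly subharmonic; the price is the degeneracy of $\Delta\varphi$ at $z_0, z_1$, and what makes the proof close is the numerical coincidence that the corner exponent $2/3$ in $|\psi'|^{-2}$ softens the $|z - z_j|^{-2}$ blow-up of $|\overline{\partial}\chi|^2$ just enough to be absorbed by the Bergman weight $|(z-z_0)(z-z_1)|^{-2}$ that appears in the hypothesis.
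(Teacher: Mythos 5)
Your argument is correct, and it follows the same Cousin-problem strategy as the paper: essentially the same partition of unity (your bound $|\overline{\partial}\chi|\lesssim |z-z_0|^{-1}+|z-z_1|^{-1}$ is equivalent on the bounded set $D$ to the paper's $C\,|(z-z_0)(z-z_1)|^{-1}$), the same splitting $f=\chi f+(1-\chi)f$, and a Hörmander correction. Where you genuinely diverge is the key technical point of forcing the $\overline{\partial}$-solution $u$ into \emph{unweighted} $L^2$ on the unbounded domain $V$. The paper keeps the flat weight $(1+|z|^2)^{-2}$ from the textbook form of Hörmander's theorem and compensates by premultiplying $f$ by an auxiliary polynomial $P(z)=1+z^2$ and dividing it out at the end; you instead choose the bounded, strictly subharmonic weight $|\psi|^2$ coming from a Riemann map $\psi:V\to\D$, so that $e^{-\varphi}\asymp 1$ and no auxiliary factor is needed. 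Two remarks. First, the one step you should justify with more care is $|\psi'(z)|\asymp |z-z_j|^{-1/3}$: $V$ is unbounded and is not a Jordan domain on the sphere (its four boundary rays all meet at $\infty$), so the Kellogg--Warschawski theorems do not apply verbatim; however, $V\cap B(z_j,r)$ is a plain sector of opening $3\pi/2$ for small $r$, so a purely local argument (straighten the corner by $w=(z-z_j)^{2/3}$, apply Schwarz reflection to $\psi$ composed with the inverse power map on a half-disc, and note that injectivity together with the boundary-to-boundary property forces a nonvanishing derivative at the corner) yields exactly the asymptotics you use. Second, your weight actually buys more than the statement requires: since $|\psi'|^{-2}\asymp |z-z_j|^{2/3}$ softens the $|z-z_j|^{-2}$ singularity of $|\overline{\partial}\chi|^2$ (which is unavoidable by Remark \ref{rem3.8}), your right-hand side only consumes $\int_D |f|^2|z-z_j|^{-4/3}\,dA$ near the corners, so the identical argument proves the strictly stronger inclusion $A^2\left(D,|(z-z_0)(z-z_1)|^{-4/3}\right)\subset A^2(\Delta)+A^2(\pi-\Delta)$, whereas the flat-weight route of the paper genuinely needs the exponent $-2$. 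You should state that refinement explicitly.
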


In view of \eqref{inclusion1}
Theorem \ref{thm4} gives an optimal result for functions in the sum of the two Bergman spaces outside $z_0$ and $z_1$. However the constraints in $z_0$ and $z_1$ are rather strong, implying in particular that functions
in $A^2\left(D, |(z-z_0)(z-z_1)|^{-2}\right)$ are locally bounded at these points.

Our second result, involving completely different tools, allows to show that functions with
almost characteristic Bergman space growth, in particular at $z_0$ and $z_1$, are also in the sum.
\begin{thm}
\label{thm1}
$E_{L\log^+\!L}(D) \subset A^2(\Delta) + A^2(\pi - \Delta)$.
\end{thm}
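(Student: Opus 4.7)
The plan is to produce an explicit Cauchy-type decomposition of $f$ along $\partial D$ and then correct the asymptotic behavior at infinity so that each piece lies in the Bergman space of the corresponding sector. Write $\partial D = \gamma_1 \cup \gamma_2$ with $\gamma_1 := \partial D \cap \partial\Delta$ (the two sides joining $z_1$, $0$, $z_0$) and $\gamma_2 := \partial D \cap \partial(\pi-\Delta)$. The Cauchy formula (valid since $f \in E_{L\log^+\!L}(D) \subset E^1(D)$) yields $f = f_1 + f_2$ on $D$, where
$$
f_j(z) := \frac{1}{2\pi i}\int_{\gamma_j}\frac{f(w)}{w-z}\,dw
$$
is holomorphic on $\C \setminus \gamma_j$; in particular $f_1 \in \Hol(\Delta)$ and $f_2 \in \Hol(\pi-\Delta)$.

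At infinity, $f_j(z) = \mp c_0/(2\pi i z) + O(|z|^{-2})$ with $c_0 := \int_{\gamma_1}\! f\,dw = -\int_{\gamma_2}\! f\,dw$ (the equality by Cauchy's theorem applied to the holomorphic $f$). This $O(1/|z|)$ decay just fails to be $A^2$-integrable on a sector, so the naive $f_j$ do not yet lie in the required spaces. To remedy this, choose any $z_* \notin \overline{\Delta \cup (\pi-\Delta)}$ --- such points exist since the union is a proper open subset of $\C$ (e.g.\ $z_* = \pi/2 + 2i$ works) --- and set
$$
\tilde f_1(z) := f_1(z) + \frac{c_0}{2\pi i(z-z_*)}, \qquad \tilde f_2(z) := f_2(z) - \frac{c_0}{2\pi i(z-z_*)}.
$$
Since $1/(z-z_*)$ is holomorphic on both $\Delta$ and $\pi-\Delta$, we still have $\tilde f_1 \in \Hol(\Delta)$, $\tilde f_2 \in \Hol(\pi-\Delta)$, $\tilde f_1 + \tilde f_2 = f$ on $D$, and now $\tilde f_j(z) = O(|z|^{-2})$ at infinity in its sector, which is comfortably $A^2$-integrable at infinity.

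It remains to show the $L^2$-integrability of $\tilde f_1$ near $\gamma_1$ in $\Delta$; the corresponding bound for $\tilde f_2$ near $\gamma_2$ follows by the symmetry $z \mapsto \pi - z$ that exchanges the two sectors. Since the rational correction is smooth near $\gamma_1$, this reduces to a local Cauchy integral estimate driven by the boundary data $f|_{\gamma_1} \in L\log^+\!L$. The Plemelj--Sokhotski formulas express the $\Delta$-side non-tangential trace of $f_1$ on $\gamma_1$ via $f|_{\gamma_1}$ and its Hilbert transform $H_{\gamma_1}(f|_{\gamma_1})$ on the piecewise-linear curve $\gamma_1$; Zygmund's theorem then supplies $H_{\gamma_1}(f|_{\gamma_1}) \in L^1(\gamma_1)$ from the $L\log^+\!L$ hypothesis. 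Combining this with distance-weighted area estimates in $\Delta$, together with a separate analysis at the corners $0$, $z_0$, $z_1$ where $f$ may have its worst boundary behavior, yields the desired $A^2$-bound. The main obstacle is this borderline corner analysis, precisely where the $L\log^+\!L$ class is critical: a strictly stronger boundary hypothesis ($L^p$ with $p>1$) would reduce to the classical $L^2$-boundedness of the Hilbert transform, while bare $L^1$ is insufficient, so the sharp form of Zygmund's theorem is indispensable.
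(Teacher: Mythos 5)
Your skeleton is the right one and matches the paper's strategy in spirit: decompose $f$ by the Cauchy formula along the two halves of $\partial D$, fix the $O(1/\abs{z})$ behaviour at infinity, and reduce the local problem near $\gamma_j$ to the $L\log^+\!L \to L^1$ mapping property of the Hilbert/Cauchy transform (Zygmund, or Calder\'on--Zygmund, which is exactly the theorem the paper invokes). Your additive correction $\pm c_0/(2\pi i(z-z_*))$ is a perfectly valid alternative to the paper's device of dividing by $P(z)=z+2i\pi$; both serve only to upgrade the decay of the Cauchy integrals from $O(1/\abs{z})$ to $O(1/\abs{z}^2)$, which is square-integrable over a sector. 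Up to and including Step 1 (the region away from $D$), your argument is sound.

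The genuine gap is in the local step, and it is precisely the step you compress into ``combining this with distance-weighted area estimates \dots yields the desired $A^2$-bound.'' What Plemelj--Sokhotski plus Zygmund's theorem give you is that the non-tangential trace of $f_1$ on $\gamma_1$ lies in $L^1$, i.e.\ (after the uniform-in-$y$ control that still has to be checked, cf.\ the maximal-function argument in the paper's Lemma \ref{LemInt1}) that $f_1$ belongs to $E^1$ of a small region abutting $\gamma_1$. But $E^1$ boundary control does \emph{not} yield local $A^2$ membership by distance-weighted estimates: the natural bound $\abs{f_1(z)}\lesssim \norme{f_1}_{E^1}/d(z,\partial\Omega)$ combined with the $L^1$ control on horizontal slices gives
$$
\int_0^{1}\Bigl(\sup_{x}\abs{f_1(x+iy)}\Bigr)\Bigl(\int\abs{f_1(x+iy)}\,dx\Bigr)dy\ \lesssim\ \norme{f_1}_{E^1}^2\int_0^1\frac{dy}{y},
$$
which diverges logarithmically. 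The missing ingredient is the Hardy--Littlewood embedding $H^1(\D)\subset A^2(\D)$ (Theorem \ref{HL} in the paper), a nontrivial theorem that cannot be replaced by elementary area estimates; it is the second pillar of the paper's proof alongside Calder\'on--Zygmund, and your outline neither names it nor substitutes anything for it. Relatedly, the ``separate analysis at the corners $0$, $z_0$, $z_1$,'' which you yourself flag as the main obstacle, is announced but not carried out; working with the Hilbert transform on the piecewise-linear curve $\gamma_1$ also forces you to justify Zygmund's theorem on a Lipschitz curve. The paper sidesteps both issues by treating each of the four sides of $\partial D$ separately, mapping each by an affine similarity to the Cauchy transform on $\C^+$ of a compactly supported $L\log^+\!L$ function on $\R$, proving $E^1$ membership of a square sitting on $\R$ (Lemma \ref{LemNouv}), and only then applying $H^1\subset A^2$ after a conformal map to the disc. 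As written, your proof is a correct global reduction plus an unproved local claim at exactly the critical point.
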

Note that in view of Theorem \ref{thm2}, this result improves the left inclusion \eqref{HKT-incl} obtained in \cite{HKT} since $E^2(D) \subset E_{L\log^+\!L}(D)$.
The methods used in the proof of Theorem \ref{thm1} are for the most part harmonic and complex analysis methods. More precisely, we use essentially the Cauchy formula for Smirnov functions, a local regularity result for the Cauchy Transform on the upper-half plane and the embedding $H^1(\D) \subset A^2(\D)$ due to Hardy and Littlewood. 

It should be observed that in terms of growth of functions in the different spaces, this result is almost sharp (especially close to the points $z_0$ and $z_1$ which are not covered by Theorem \ref{thm4}). Indeed, a function in $A^2(\D)$ cannot grow faster than $\frac{1}{d(z, \partial D)}$, while the growth of a function in $E_{L\log^+\!L}(D)$ is bounded by $\frac{1}{d(z, \partial D)\log(\frac{1}{d(z,\partial D)})}$. This last estimate comes from the Cauchy formula and the Hölder inequality for  Orlicz spaces.

As an easy consequence of the above results we would like to state the following corollary.

\begin{cor}
We have
$$
 E_{L\log^+\!L}(D) + A^2\left(D, |(z-z_0)(z-z_1)|^{-2}\right) 
 \subset A^2(\Delta) + A^2(\pi - \Delta)\subset A^2(D). 
$$
\end{cor}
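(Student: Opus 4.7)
The corollary is essentially a direct consequence of assembling three results already stated in the paper, so the proof should be very short.

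First I would handle the left inclusion. Since $E_{L\log^+\!L}(D) \subset A^2(\Delta) + A^2(\pi - \Delta)$ by Theorem \ref{thm1} and $A^2\bigl(D, |(z-z_0)(z-z_1)|^{-2}\bigr) \subset A^2(\Delta) + A^2(\pi - \Delta)$ by Theorem \ref{thm4}, taking Minkowski sums on both sides (using that the right-hand side is a vector space) immediately yields
$$E_{L\log^+\!L}(D) + A^2\bigl(D, |(z-z_0)(z-z_1)|^{-2}\bigr) \subset A^2(\Delta) + A^2(\pi - \Delta).$$

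For the right inclusion, I would invoke \eqref{inclusion1}, which was already noted in the text. The underlying reason is geometric: the square $D$ is exactly the intersection $\Delta \cap (\pi - \Delta)$, so $D \subset \Delta$ and $D \subset \pi - \Delta$. Hence restriction gives continuous embeddings $A^2(\Delta) \hookrightarrow A^2(D)$ and $A^2(\pi - \Delta) \hookrightarrow A^2(D)$, whose sum lands in $A^2(D)$.

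There is no genuine obstacle here: the deep content (Theorems \ref{thm1}, \ref{thm4}, and the Bergman-restriction inclusion \eqref{inclusion1}) has already been established; the corollary is a formal concatenation. If anything, the only point worth spelling out explicitly is the vector space structure of the target $A^2(\Delta) + A^2(\pi - \Delta)$ that allows the sum of two separate inclusions to give an inclusion of the sum.
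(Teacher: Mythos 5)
Your proposal is correct and matches the paper's intent exactly: the corollary is stated there as "an easy consequence of the above results," namely Theorems \ref{thm1} and \ref{thm4} for the left inclusion (using that $A^2(\Delta)+A^2(\pi-\Delta)$ is a vector space) and the restriction inclusion \eqref{inclusion1} for the right one. Nothing further is needed.
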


In view of Theorem \ref{thm2}, this corollary thus improves \eqref{HKT-incl} found in \cite{HKT}. Nevertheless, though $A^2\left(D, |(z-z_0)(z-z_1)|^{-2}\right)$-functions behave like arbitrary $A^2(D)$-functions outside $z_0,z_1$, and $E_{L\log^+\!L}(D)$-functions allow a growth in a sense close to $A^2(D)$-functions in $z_0,z_1$, the above corollary still leaves of course a gap. Note that there is no natural completion of $A^2\left(D, |(z-z_0)(z-z_1)|^{-2}\right)$  in $A^2(D)$.
\\

The paper is organized as follows. In Section \ref{section2}, we prove Theorem \ref{thm2}. Sections \ref{section3} and \ref{subsection3.2} are devoted to the proofs of Theorems \ref{thm1} and \ref{thm4} respectively. 

\section{Sum of Bergman spaces \label{section2}}

We start recalling some facts from \cite{HKT}. First, it is not difficult to check that the 
reachable states of the 1-D heat equation can be represented as a Fourier-sine series in the following way:
\begin{multline*}
(\Phi_\tau u)(x)=\frac{2}{\pi}\sum_{n\geqslant 1}n\left[\int_0^\tau {\rm e}^{n^2(\sigma-\tau)}  u_0(\sigma) \, d\sigma\right]\sin(n x) \\
+ \frac{2}{\pi}\sum_{n\geqslant 1}n(-1)^{n+1}\left[\int_0^{\tau} {\rm e}^{n^2(\sigma-\tau)}  u_\pi(\sigma) \, d\sigma\right]\sin(n x),
\quad \tau>0,\ x\in (0,\pi).
\end{multline*}
With the elementary formula $\sin u=(e^{iu}-e^{-iu})/(2i)$ in mind and the Poisson
summation formula, the authors of \cite{HKT} show that
\begin{equation*}
(\Phi_\tau u)(x)=\int_0^\tau  \frac{\partial K_0}{\partial x} (\tau-\sigma,x) u_0 (\sigma) \, d\sigma + \int_0^\tau \frac{\partial K_\pi}{\partial x}(\tau-\sigma,x) u_\pi (\sigma) \, d\sigma,
\end{equation*}
where
\begin{eqnarray*}
K_0 (\sigma,x)&=& -\frac{2}{\pi}\left(
\sum_{n\geqslant 1} {\rm e}^{-n^2\sigma}\cos(n x) +1\right) \\ 
 &=& -
\frac{1}{\pi} \sum_{n\geqslant 1}  {\rm e}^{-n^2 \sigma}\left({\rm e}^{i n x}+
{\rm e}^{-i n x}\right)-\frac{2}{\pi}\\
&=& - \frac{1}{\pi}   \sum_{n\in \mathbb{Z}^*} {\rm e}^{-n^2\sigma}{\rm e}^{i n x}
 -\frac{2}{\pi}, \quad \sigma>0,\ \ x\in (0,\pi).
\end{eqnarray*}

Hence, setting $\widetilde{K_0}(\sigma, z)= - \sqrt{\frac{1}{\pi \sigma}} \sum_{m \in \Z^*} e^{-\frac{(z+2m\pi)^2}{4\sigma}}$, we can write (see \cite[equation (2.18)]{HKT})
\begin{equation}
\label{eq1}
\Phi_\tau (u_0, u_\pi) = \widetilde{\Phi}_\tau u_0 + \widetilde{\widetilde{\Phi}} u_\pi + R_{0, \tau} u_0 + R_{\pi, \tau} u_\pi
\end{equation} where 
\begin{align*}
&\left[\widetilde{\Phi}_\tau f\right](s) = \int_0^\tau \frac{s e^{- \frac{s^2}{4(\tau-\sigma)}}}{2\sqrt{\pi} {(\tau-\sigma)}^{\frac{3}{2}}} f(\sigma) d\sigma \qquad \text{ and } \qquad \left[\widetilde{\widetilde{\Phi}}_\tau f\right](s) = \left[\widetilde{\Phi}_\tau f\right](\pi-s) \\
& \left[R_{0, \tau} f \right](s)= \int_0^\tau \frac{\partial \widetilde{K_0}}{\partial s}(\tau-\sigma, s) f(\sigma) d\sigma  \qquad \text{ and } \qquad \left[R_{\pi, \tau} f \right](s) = \left[R_{0, \tau} f \right](\pi-s).
\end{align*}
We can now prove Theorem \ref{thm2}. 
\begin{proof}[Proof of Theorem \ref{thm2}]
The key of the proof is that $\widetilde{\Phi}_\tau$ is an isometry from $L^2(0, \tau)$ to $A^2(\Delta)$ and we can compute its range. Indeed, denote by $\mathcal{L}$ the normalized Laplace transform defined by $\mathcal{L}(f)(s)=\frac{1}{\sqrt{\pi}}\int_0^{+\infty} e^{-st} f(t) dt$ and $G: A^2(\C_+) \to A^2(\Delta)$ the unitary operator associated to the conformal mapping $z \mapsto z^2$ from $\Delta$ to $\C_+$, defined by $G(f)(z)=2zf\left(z^2\right)$.

By the change of variables $t=\frac{1}{4(\tau-\sigma)}$, we obtain 
$$ 
\forall s \in \Delta, \ \left(\widetilde{\Phi}_\tau f\right)(s) = \int_0^\tau \frac{s e^{- \frac{s^2}{4(\tau-\sigma)}}}{2\sqrt{\pi} {(\tau-\sigma)}^{\frac{3}{2}}} f(\sigma) d\sigma 
= \frac{s}{\sqrt{\pi}} \int_\frac{1}{4\tau}^{+\infty} \frac{ e^{- s^2t}}{\sqrt{t}} \, f\!\left(\tau - \frac{1}{4t}\right) dt 
$$
Define for $f \in L^2(0, \tau)$,
$$(Tf)(t)= 
\begin{cases} \frac{f(\tau-\frac{1}{4t})}{2\sqrt{t}} \quad \text{if } t > \frac{1}{4\tau}, \\
0 \qquad \quad  \text{if } 0 < t \leq \frac{1}{4\tau}.
\end{cases}.$$ 
It is easily seen that the operator $T$ is an isometry from $ L^2(0, \tau)$ to $L^2(\R^+, \frac{dt}{t})$ with range $L^2\left(\left(\frac{1}{4\tau}, + \infty\right), \frac{dt}{t}\right)$.
Hence
$$
\left(\widetilde{\Phi}_\tau f\right)(s) = 2s \mathcal{L}(Tf)\left(s^2\right)\\
= \left(G \mathcal{L} T f \right) (s).
$$
The last step is the following Paley-Wiener type theorem for Bergman spaces (which seems to be a ``folk theorem'', a proof for which may be found in \cite{DGGMR}). 
\begin{prop}
The Laplace transform $\mathcal{L}$ is unitary from $L^2\left(\R_+, \frac{dt}{t} \right)$ to $A^2\left(\C_+\right)$ where $\C_+=\{z \mid \mathrm{Re}z >0 \}$.
\end{prop}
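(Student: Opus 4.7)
The plan is to prove the two halves of the statement (isometry and surjectivity) separately, and link them by a Plancherel/Fubini argument on vertical lines.

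First I would show that $\mathcal{L}$ is an isometric embedding of $L^2(\R_+, dt/t)$ into $A^2(\C_+)$. For fixed $x>0$, the function $g_x(t):=\mathbf{1}_{t>0}e^{-xt}f(t)$ lies in $L^2(\R)$ (the bound $te^{-2xt}\leq 1/(2ex)$ converts the $dt/t$-norm into control of $\int e^{-2xt}|f|^2\,dt$), and by definition
$$\mathcal{L}f(x+iy)=\frac{1}{\sqrt{\pi}}\,\widehat{g_x}(y),$$
where $\widehat{\,\cdot\,}$ is the Fourier transform in $y$. Plancherel on each vertical line gives $\int_{\R}|\widehat{g_x}(y)|^2\,dy=2\pi\int_0^\infty e^{-2xt}|f(t)|^2\,dt$, and Fubini together with $\int_0^\infty e^{-2xt}\,dx=1/(2t)$ produces exactly the weight $dt/t$, yielding $\norme{\mathcal{L}f}_{A^2(\C_+)}^2=\int_0^\infty|f(t)|^2\,dt/t$. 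Holomorphy of $\mathcal{L}f$ on $\C_+$ is a routine dominated-convergence check using the same $L^2$-bound on $g_x$.

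For surjectivity I would reverse-engineer the construction. Given $F\in A^2(\C_+)$, Fubini guarantees $F(x+i\cdot)\in L^2(\R)$ for a.e.\ $x>0$; I would then set
$$h_x(t)=\frac{1}{2\pi}\int_\R F(x+iy)\,e^{iyt}\,dy,$$
so that by Fourier inversion $F(x+iy)=\int_\R h_x(t)\,e^{-iyt}\,dt$. The Cauchy--Riemann equation $\partial_x F=-i\partial_y F$ translates, after differentiating under the integral, into $\partial_x h_x(t)=-t\,h_x(t)$, whence $h_x(t)=g(t)e^{-xt}$ for some function $g$ on $\R$ independent of $x$. Plancherel combined with $\int_0^\infty\norme{F(x+i\cdot)}_2^2\,dx<\infty$ then forces $g$ to vanish a.e.\ on $(-\infty,0]$ (since $\int_0^\infty e^{-2xt}\,dx$ diverges there) and $\int_0^\infty|g(t)|^2\,dt/t<\infty$. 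Reading $F(x+iy)=\sqrt{\pi}\,\mathcal{L}(g)(x+iy)$ exhibits the preimage.

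The main obstacle is making the Fourier-side Cauchy--Riemann computation rigorous: both differentiating under the inverse Fourier integral and extracting a single $x$-independent trace $g$ require regularity one does not have a priori. The cleanest workaround is to perform the computation first on a dense subclass of $A^2(\C_+)$ (for instance Laplace transforms of compactly supported $f$, or $F$'s whose vertical slices are Schwartz) and then extend by density using the isometry already established. Alternatively, once the isometry is in hand, it suffices to prove density of the image of $\mathcal{L}$ in $A^2(\C_+)$, for example by testing against reproducing kernels or by exhibiting an orthonormal basis of $L^2(\R_+,dt/t)$ whose image is an orthonormal basis of $A^2(\C_+)$.
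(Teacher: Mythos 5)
The paper does not actually prove this proposition: it is labelled a ``folk theorem'' and delegated to the reference \cite{DGGMR}, so there is no in-paper argument to compare against. Judged on its own, your proof is the standard one and is essentially correct. The isometry half is complete: the bound $te^{-2xt}\le 1/(2ex)$ puts each slice $g_x$ in $L^2(\R)$, and the Plancherel--Fubini computation with $\int_0^\infty e^{-2xt}\,dx=1/(2t)$ reproduces the weight $dt/t$ exactly with the normalization $1/\sqrt{\pi}$ (one checks the constants do cancel: $\frac{1}{\pi}\cdot 2\pi\cdot\frac{1}{2t}=\frac{1}{t}$). For surjectivity you correctly identify the genuine gap --- the Fourier-side Cauchy--Riemann manipulation is only formal for a general $F\in A^2(\C_+)$, whose slices need not even all lie in $L^2(\R)$ --- and both repairs you propose work. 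The cleanest is the reproducing-kernel route: since $\mathcal{L}$ is isometric from a complete space, its range is closed, and the kernel $k_w(z)=\pi^{-1}(z+\overline{w})^{-2}$ of $A^2(\C_+)$ is exhibited explicitly as $\mathcal{L}\bigl(\pi^{-1/2}\,te^{-\overline{w}t}\bigr)$ with $te^{-\overline{w}t}\in L^2(\R_+,dt/t)$, and the kernels span a dense subspace. Alternatively, your Cauchy--Riemann argument is made rigorous by noting that subharmonicity of $|F|^2$ gives $\sup_{x\ge\varepsilon}\int_\R|F(x+iy)|^2dy<\infty$, so each translate $F(\cdot+\varepsilon)$ lies in the Hardy space $H^2(\C_+)$ and the classical Paley--Wiener theorem produces your $g$ directly, with the $x$-independence following from uniqueness of Fourier transforms rather than from differentiating under the integral. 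Either completion yields a full proof; what your write-up buys over the paper is that the statement is no longer a black box.
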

So, if $\simeq$ means that the operator is unitary, we have the following diagram. 
$$ L^2(0, \tau) \overset{T}{\underset{\simeq}{\longrightarrow}} L^2\left(\left(\frac{1}{4\tau}, + \infty \right), \frac{dt}{t}\right) \subset  L^2\left(\R_+, \frac{dt}{t} \right) \overset{\mathcal{L}}{\underset{\simeq}{\longrightarrow}} A^2\left(\C_+\right) \overset{G}{\underset{\simeq}{\longrightarrow}} A^2\left(\Delta\right)
$$
Hence, by composition, $\widetilde{\Phi}_\tau$ is isometric from  $L^2(0, \tau)$ to $A^2\left(\Delta\right)$, and 
$$
\Ran \widetilde{\Phi}_{\tau}= 
 G\mathcal{L}\left[L^2\left(\left(\frac{1}{4\tau}, + \infty \right), \frac{dt}{t}\right)\right]
 \subset A^2(\Delta).
$$ 
We get a similar result for $\widetilde{\widetilde{\Phi}}_{\tau}$. In order to discuss
the range of $\Phi_{\tau}$ we thus have to investigate the remainder terms
$R_{0,\tau}$ and $R_{\pi,\tau}$, which, morally speaking, are sums converging very quickly
since they involve gaussians centered essentially at $\pi n$, $n\in \Z^*$. For these
remainder terms we will use
the lemma below which is a straightforward modification of Lemma 4.1 of \cite{HKT}, the main difference being a square root in the integral operator, which does not change the boundedness and the convergence to zero. 

\begin{lemme}\label{LemmaHKT4.1}
Let $\omega_0$ and $\omega_\pi$ be the weights defined in \eqref{2}. Then $R_{0, \tau}$ and $R_{\pi, \tau}$ are bounded from $L^2(0, \tau)$ to $A^2(\Delta, \omega_0 ) + A^2(\pi-{\Delta}, \omega_\pi)$. 
Moreover, $\|R_{0, \tau}\|=\|R_{\pi, \tau}\| \underset{\tau \to 0}{\to} 0$. 
\end{lemme}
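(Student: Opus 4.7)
The plan is to adapt the strategy of \cite[Lem.\ 4.1]{HKT}, the only new feature being the extra factor $\sigma^{-1/2}$ coming from the $\widetilde{K_0}$ appearing in the expansion \eqref{eq1}. I concentrate on $R_{0,\tau}$, the case $R_{\pi,\tau}$ being symmetric.

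First, I would differentiate $\widetilde{K_0}$ termwise in the variable $s$ and use this to decompose $R_{0,\tau}$ as a series of ``shifted'' versions of $\widetilde{\Phi}_\tau$:
\begin{equation*}
[R_{0,\tau} f](s) = \sum_{m\in \Z^*} [T_m f](s), \qquad [T_m f](s) = \int_0^\tau \frac{s+2m\pi}{2\sqrt{\pi}(\tau-\sigma)^{3/2}}\, e^{-(s+2m\pi)^2/(4(\tau-\sigma))}\, f(\sigma)\, d\sigma.
\end{equation*}
Notice that formally $[T_m f](s) = [\widetilde{\Phi}_\tau f](s+2m\pi)$, so each summand has the same analytical structure as the principal term studied in the proof of Theorem \ref{thm2}. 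I would then split the series according to the sign of $m$: the terms with $m\ge 1$, whose Gaussian is centered at $-2m\pi$ on the negative real axis, will be placed into $A^2(\Delta,\omega_0)$, whereas the terms with $m\le -1$, whose Gaussian is centered far to the right of $\pi$, will be placed into $A^2(\pi-\Delta,\omega_\pi)$.

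For the boundedness of each piece, the crucial pointwise estimate is that on $\Delta$ one has $\mathrm{Re}(s^2)\ge 0$ and $\mathrm{Re}(s)\ge 0$, so for $m\ge 1$,
\begin{equation*}
\mathrm{Re}\bigl((s+2m\pi)^2\bigr) = \mathrm{Re}(s^2) + 4m\pi\,\mathrm{Re}(s) + 4m^2\pi^2 \ge 4m^2\pi^2.
\end{equation*}
Applying Cauchy--Schwarz in the $\sigma$-integral, inserting the weight $\omega_0(s)=e^{\mathrm{Re}(s^2)/(2\tau)}/\tau$, and noting that the combination $\mathrm{Re}(s^2)/(2\tau) - \mathrm{Re}((s+2m\pi)^2)/(2(\tau-\sigma))$ is $\le -2m^2\pi^2/(\tau-\sigma) - 2m\pi\,\mathrm{Re}(s)/(\tau-\sigma)$, an elementary majoration on $\Delta$ yields
\begin{equation*}
\|T_m f\|_{A^2(\Delta,\omega_0)}^2 \le C\, e^{-c\,m^2\pi^2/\tau}\, \|f\|_{L^2(0,\tau)}^2,
\end{equation*}
for constants $C,c>0$ independent of $\tau,m$. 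Summing the resulting geometric-type series in $m\ge 1$ gives a bound of the form $C\sum_{m\ge 1} e^{-cm^2\pi^2/\tau}$ which is finite for each $\tau>0$ and tends to $0$ as $\tau\to 0$.

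The analogous argument with $m\le -1$ and the reflection $s\mapsto \pi-s$ produces the companion estimate into $A^2(\pi-\Delta,\omega_\pi)$. Adding the two contributions establishes both the boundedness of $R_{0,\tau}$ and the vanishing $\|R_{0,\tau}\|\to 0$ as $\tau\to 0$. The main technical obstacle will be the careful bookkeeping of the $\tau$-dependence when combining the exponential weight $\omega_0$ with the shifted Gaussian kernel: one must exploit the sign of $\mathrm{Re}(s^2)$ and of $\mathrm{Re}(s)$ on $\Delta$ so that the resulting exponent is genuinely negative and quadratic in $m$, which is what powers the summability and the decay in $\tau$.
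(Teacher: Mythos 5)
Your proposal is correct and follows exactly the route the paper intends: the paper gives no proof of this lemma beyond declaring it a ``straightforward modification of Lemma 4.1 of [HKT]'' (the only change being the extra $(\tau-\sigma)^{-1/2}$ in the kernel), and your termwise decomposition into shifted copies of $\widetilde{\Phi}_\tau$, the sign-of-$m$ split between the two sectors, and the estimate $\mathrm{Re}\bigl((s+2m\pi)^2\bigr)\ge \mathrm{Re}(s^2)+4m\pi\,\mathrm{Re}(s)+4m^2\pi^2$ combined with $\tau-\sigma\le\tau$ is precisely that modification. The retained cross term $e^{-2m\pi\,\mathrm{Re}(s)/(\tau-\sigma)}$, which you correctly keep to make the area integral over the unbounded sector converge, is the one point where care is needed, and you have it.
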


Since $A^2(\Delta, \omega_0 ) + A^2(\pi-{\Delta}, \omega_\pi)\subset A^2(\Delta)
+A^2(\pi-\Delta)$, the inclusion $\Ran\Phi_{\tau}\subset A^2(\Delta)+A^2(\pi-\Delta)$ is a direct consequence of the decomposition \eqref{eq1}, the above discussion and Lemma \ref{LemmaHKT4.1}.  
\\

For the converse inclusion, we will prove $A^2(\Delta) \subset  \Ran\Phi_\tau$ and $A^2(\pi-\Delta) \subset  \Ran\Phi_\tau$. Using that $G$ and $\mathcal{L}$ are unitary, we have the decomposition 
\begin{align*}
A^2(\Delta) &= G \mathcal{L} \left[L^2\left(\R_+, \frac{dt}{t} \right)\right] \\
&= G \mathcal{L} \left[ L^2\left(\left(0, \frac{1}{4\tau}\right), \frac{dt}{t}\right) {\oplus} L^2\left(\left(\frac{1}{4\tau}, + \infty \right), \frac{dt}{t}\right)  \right] \\
&= X_0 {\oplus} \Ran\widetilde{\Phi}_\tau
\end{align*}
where we wrote $X_0 :=  G \mathcal{L} \left[ L^2\left(\left(0, \frac{1}{4\tau}\right), \frac{dt}{t}\right) \right]$ and where, as usual ${\oplus}$ means orthogonal sum. Similarly, we have $A^2(\pi-\Delta) = X_\pi {\oplus} \Ran\widetilde{\widetilde{\Phi}}_\tau$, where $X_\pi$ is the image of $X_0$ by the transformation $f\mapsto f(\pi - \cdot)$. Hence, it is enough to prove that $X_0, X_\pi,  \Ran\widetilde{\Phi}_\tau$ and  $\Ran\widetilde{\widetilde{\Phi}}_\tau$ are contained in $\Ran\Phi_\tau$. For this, note that for every $u_0 \in L^2(0, \tau)$, we have 
$$ \widetilde{\Phi}_\tau u_0 = \Phi_\tau (u_0, 0) - R_{0, \tau}u_0 .$$
Since $A(\Delta,\omega_0) + A^2(\pi-{\Delta}, \omega_\pi) \subset \Ran \Phi_{\tau}$, we get from Lemma \ref{LemmaHKT4.1} that $R_{0, \tau}u_0 \in \Ran\Phi_\tau$. It follows that $ \Ran\widetilde{\Phi}_\tau \subset \Ran\Phi_\tau$. The case of $\Ran\widetilde{\widetilde{\Phi}}_\tau$ is similar. Finally, for $a>0$, denote by 
$$\mathrm{PW}_a(\C_+)=\enstq{f \in \mathrm{Hol}(\C)}{\exists C >0, \abs{f(z)} \leq C e^{\pi \abs{z}} \, \text{ and } \, \int_\R \abs{f(iy)}^2 dy < \infty}$$ 
the Paley-Wiener space on the right-half plane. Then by the classical Paley-Wiener theorem, $X_0 \subset G \mathcal{L} \left[ L^2\left(0, \frac{1}{4\tau}\right) \right] \subset G \mathcal{L} \left[ L^2\left(-\frac{1}{4\tau}, \frac{1}{4\tau}\right)\right] = G\left[ \mathrm{PW}_a(\C_+) \right]$. Thus, $X_0$ is a space of entire functions and, as such, is contained in the reachable space. The same argument proves also that the reachable space includes $X_\pi$, and the proof is complete. 
\end{proof}

\section{Proof of Theorem \ref{thm1}\label{section3}}
Let $P(z)=z+2i\pi$.
It suffices to prove the following assertion. 
\begin{equation}
\label{eq2}
\forall f \in E_{L\log^{+}\!L}\left(D\right), \ \frac{f}{P} \in A^2(\Delta) + A^2(\pi - \Delta)
\end{equation}
Indeed, assume that \eqref{eq2} is true and let $g \in E_{L\log^{+}\!L}\left(D\right)$. Since $P$ is bounded analytic on $\widebar{D}$, $Pg$ belongs also to $E_{L\log^{+}\!L}\left(D\right)$. Hence, by \eqref{eq2}, $g=(Pg)/P$ belongs to $A^2(\Delta) + A^2(\pi - \Delta)$ and then to $\Ran \Phi_\tau$ by Theorem \ref{thm2}, which proves the inclusion. 
\begin{rmk}
With a more refined argument, as used in \cite[corollary 3.6]{HKT} and here in Section \ref{subsection3.2}, we can prove that $E_{L\log^{+}\!L}\left(D\right) \subset A^2(\Delta, \omega_0) + A^2(\pi - \Delta, \omega_\pi)$ where $\omega_0$ and $\omega_\pi$ are defined in \eqref{2}. This observation also follows from Thm \ref{thm1} and 
Normand's result \eqref{Thomas} (see \cite{Tu})
\end{rmk}

So, pick $f \in E_{L\log^{+}\!L}\left(D\right)$ and let us prove \eqref{eq2}. 
\paragraph{Decomposition.}
Let $\gamma$ be the boundary of $D$ parameterized counterclockwise side by side as follows (see Figure \ref{figure1}). 
\begin{align*}
&\fonction{\gamma_{1,+}}{\left[0, 1\right]}{\C}{t}{(1-t)\frac{\pi}{2}(1+i)} &&\fonction{\gamma_{1,-}}{\left[0, 1 \right]}{\C}{t}{(1-i)\frac{\pi}{2}t} \\
&\fonction{\gamma_{2,+}}{\left[0, 1\right]}{\C}{t}{\pi (1-t) + t\left((1+i)\frac{\pi}{2} \right)}  &&\fonction{\gamma_{2,-}}{\left[0, 1\right]}{\C}{t}{(1-i)\frac{\pi}{2}(1-t)+t\pi} 
\end{align*}

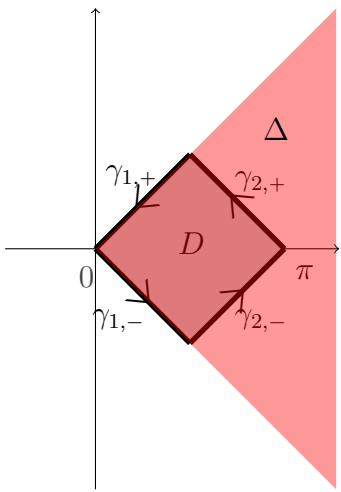
\begin{figure}[h]
\begin{center}
\definecolor{uuuuuu}{rgb}{0.26666666666666666,0.26666666666666666,0.26666666666666666}
\begin{tikzpicture}[scale=0.4]

\draw  [->](6.283185307179586,0)-- (8.1,0);
\draw  (-3,0)-- (0,0);
\draw  [->](0,-8)-- (0,8);

\fill[line width=2.pt,fill=black,fill opacity=0.2] (0.,0.) -- (3.141592653589793,-3.141592653589793) -- (6.283185307179586,0.) -- (3.141592653589794,3.141592653589793) -- cycle;

\draw [line width=2.pt] (0.,0.)-- (3.141592653589793,-3.141592653589793) node[midway,sloped,xscale=1, yscale=2]{$>$};
\draw [line width=2.pt] (3.141592653589793,-3.141592653589793)-- (6.283185307179586,0.)node[midway,sloped,xscale=1, yscale=2]{$>$};
\draw [line width=2.pt] (6.283185307179586,0.)-- (3.141592653589794,3.141592653589793)node[midway,sloped,xscale=1, yscale=2]{$<$};
\draw [line width=2.pt] (3.141592653589794,3.141592653589793)-- (0.,0.)node[midway,sloped,xscale=1, yscale=2]{$<$};

\draw[color=uuuuuu] (-0.3,-0.25) node[below] {$0$};
\draw[color=black] (3.19,0.17) node {$D$};
\draw[color=black] (2,-1.65) node[below left] {$\gamma_{1,-}$};
\draw[color=black] (5.5,-2.4) node {$\gamma_{2,-}$};
\draw[color=black] (6.283185307179586,-0.2) node[below right] {$\pi$};
\draw[color=black] (5.5,2.2) node {$\gamma_{2,+}$};
\draw[color=black] (1.2,2.4) node {$\gamma_{1,+}$};

\fill[line width=2.pt,fill=red,fill opacity=0.4] (8,8) -- (0,0) -- (8,-8)--cycle;
\draw[color=black] (6,4) node[xscale=1, yscale=1] {$\Delta$};

\end{tikzpicture}
\caption{\label{figure1}The square $D$, the path $\gamma$ and the  sector $\Delta$.}
\end{center}
\end{figure}


The key idea is to decompose $f$ {\it via} the Cauchy formula for functions in $E^1(D)$ (see \cite[Thm. 10.4 p.170]{Du}) : 
\begin{align*}
\forall z \in D, \  f(z)&= \frac{1}{2i\pi} \int_{\gamma} \frac{f(u)}{u-z} du\\
&= \frac{1}{2i\pi} \sum_{\substack{k \in \left\lbrace 1, \, 2 \right\rbrace \\ \varepsilon \in \lbrace \pm \rbrace}} \int_{\gamma_{k,\varepsilon}} \frac{f(u)}{u-z} du \\
&=\frac{1}{2} \sum_{\substack{k \in \left\lbrace 1, \, 2 \right\rbrace \\ \varepsilon \in \lbrace \pm \rbrace}} f_{k,\varepsilon}(z)
\end{align*}
where we have written 
$$
f_{k,\varepsilon}(z)=\frac{1}{i\pi}\int_{\gamma_{k,\varepsilon}} \frac{f(u)}{u-z} du,
\quad k \in \left\lbrace 1, \, 2 \right\rbrace,\ \varepsilon \in \lbrace \pm \rbrace.
$$  
For the reader acquainted with Hardy spaces, the crucial observation here is that $f_{k,\varepsilon}$ can be seen --- modulo rotation and translation --- as a scalar product 
between a (compactly supported) function and a reproducing kernel of the Hardy space, which thus yields a (Riesz-) projection on the Hardy space. It is known that this projection
is bounded when $f_{k,\varepsilon}\in L^p$, $p>1$, but
not when $p=1$. As will be explained below, on the real line, this boundedness remains valid
when $f_{k,\varepsilon}\in L\log L$ and $f_{k,\varepsilon}$ is compactly supported. 
Once we have established this fact, a theorem by Hardy and Littlewood on inclusion between
Hardy and Bergman spaces will allow to conclude.\\

The remainder part of the section will be devoted to
show that $f_{1, \varepsilon}/ P \in A^2\left(\Delta\right)$ and $f_{2, \varepsilon}/P \in A^2\left(\pi-\Delta\right)$ for $\varepsilon \in \lbrace \pm \rbrace$. 
We cut each sector $\Delta$ and $\pi - \Delta$ in two disjoint parts, which will be treated separately.  
For that, given a fixed $a >0$, denote by $D_a$ the homothetic dilation of $D$ with center 0 and obtained by adding length $a >0$ to the sides of $D$ (see Figure \ref{figure3}).  
We will consider the disjoint union $\Delta=D_a\cup \Delta\setminus D_a$ (and
similarly for $\pi-\Delta$).
The proof is composed of two steps. \\

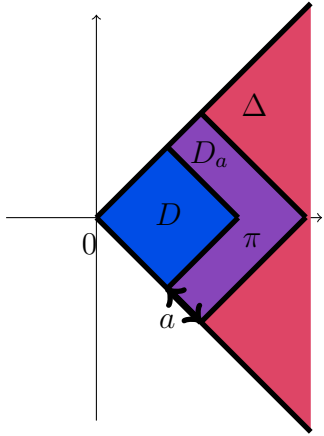
\begin{figure}[h]
\begin{center}
\definecolor{ptc}{rgb}{0,0.3,0.9}
\definecolor{gc}{rgb}{0.53,0.27,0.73}
\definecolor{sec}{rgb}{0.87, 0.27,0.4}
\begin{tikzpicture}[scale=0.3]
\draw  [->](6.283185307179586,0)-- (10,0);
\draw  (-4,0)-- (0,0);
\draw  [->](0,-9)-- (0,9);
\fill[line width=2.pt,fill=sec] (9.5,9.5) -- (0,0) -- (9.5,-9.5)--cycle;
\fill[line width=2.pt,color=blue,fill=gc] (0,0.) -- (4.635238686543314,-4.635238686543314) -- (9.283185307179584,0.016814692820412837) -- (4.635238686543314,4.635238686543314) -- cycle;
\fill[line width=2.pt,fill=ptc] (0.,0.) -- (3.141592653589793,-3.141592653589793) -- (6.283185307179586,0.) -- (3.141592653589794,3.141592653589793) -- cycle;
\draw [line width=2.pt] (3.141592653589793,-3.141592653589793)-- (6.283185307179586,0.);
\draw [line width=2.pt] (6.283185307179586,0.)-- (3.141592653589794,3.141592653589793);
\draw [line width=2pt,domain=0.0:9.5] plot(\x,{(-0.--3.141592653589793*\x)/3.141592653589794});
\draw [line width=2pt,domain=0.0:9.5] plot(\x,{(-0.-3.141592653589793*\x)/3.141592653589793});
\draw [line width=2.pt,color=black] (4.635238686543314,-4.635238686543314)-- (9.283185307179584,0.016814692820412837);
\draw [line width=2.pt,color=black] (9.283185307179584,0.016814692820412837)--(4.635238686543314,4.635238686543314) ;
\draw [->][line width=2.5pt] (3.141592653589793,-3.141592653589793)-- (4.635238686543314,-4.635238686543314);
\draw [->][line width=2.5pt] (4.635238686543314,-4.635238686543314)--(3.141592653589793,-3.141592653589793) ;
\draw[color=black] (7,5) node[xscale=1, yscale=1] {$\Delta$};
\draw[color=black] (3.19,0.17) node {$D$};
\draw[color=black] (5,2.816911927995097) node {$D_a$};
\draw[color=black] (-0.3,-0.25) node[below] {$0$};
\draw[color=black] (6,-0.2) node[below right] {$\pi$};
\draw[color=black ] (2.3,-3.8) node[sloped, below right] {$a$};
\end{tikzpicture}
\caption{\label{figure3}The squares $D$, $D_a$ and the sector $\Delta$.}
\end{center}
\end{figure}

\underline{\bf{$\bullet$ Step 1 :}} In this step we prove the following claim:
\begin{equation}\label{claim1}
 f_{1, \varepsilon}/ P \in A^2\left(\Delta \setminus D_a \right)
\end{equation}
 (the case $f_{2, \varepsilon}/P \in A^2\left(\left(\pi - \Delta\right) \setminus \left(\pi-D_a \right) \right)$ follows in a similar fashion).\\
To do so, remark that there exists a constant $C_a >0$ such that for any $z \notin D_a$,  $\abs{z} + 1 \leq C_a d(z,\partial D)$. 
Using the triangular inequality, we have for any $k \in \left\lbrace 1, \, 2 \right\rbrace$ and $\varepsilon \in \lbrace \pm \rbrace$,
\begin{align*}
\forall z \notin D_a, \ |f_{k, \varepsilon}(z)|&\leq  \frac{1}{\pi} \int_{\gamma_{k,\varepsilon}} \left| \frac{f(u)}{u-z} \right| \abs{du} \\
&\leq \left\|f\right\|_{L^1(\partial D)} \frac{1}{\pi d(z,\partial D)}\\
& \leq \frac{C}{\abs{z}+1},
\end{align*}
where we have used that $L\log^+ L\subset L^1$ on a segment.

So, since $-2i\pi \notin \overline{\Delta \setminus D_a}$, we obtain
\begin{align*}
\int_{\Delta \setminus D_a} \abs{\frac{f_{1,\varepsilon}(z)}{p(z)}}^2 dA(z) 
&\le C \int_{\Delta \setminus D_a} \frac{dA(z)}{\abs{2i\pi+z}^2(1+\abs{z})^2} < +\infty.
\end{align*}
This proves claim \eqref{claim1}. \\

\noindent
\underline{\bf{$\bullet$ Step 2 :}} This step is more delicate and uses the
Cauchy (or Hilbert) transform and the inclusion $H^1(\D)\subset A^2(\D)$.

We  need to show the following claim
\begin{equation}\label{claim2}
f_{1, \varepsilon}/ P \in A^2\left(D_a \right)
\end{equation}
(and $f_{2, \varepsilon}/P \in A^2\left(\pi-D_a\right)$). It is enough to treat the case $f_{1,+}$, the others follow in a similar way. 

For $g \in L^1(\R)$, we denote by $\mathcal{C}g$ its Cauchy Transform defined by 
$$
\left(\mathcal{C}g\right)(z)= \frac{1}{i\pi} \int_\R \frac{g(t)}{t-z} dt, z\in \C^+:=\enstq{z \in \C}{\mathrm{Im}(z) >0}. 
$$
For more details on this operator, we refer to \cite{CMR}. 

We first explain briefly how to translate $f_{1,+}$ to $\mathcal{C}g$ for some suitable $g$. It 
is essentially rotating and translating the line through $\gamma_{1,+}$ to $\R$.
To be more explicite, let $\alpha_{1,+}: z \mapsto 1+\frac{\sqrt{2}}{\pi}e^{i\frac{3\pi}{4}}z$. This is a direct similarity transformation which sends $D_a$ to $\C^+$ and in particular
$\gamma_{1,+}$ onto $[0,1]$ (note that the orientation is preserved, e.g.\ the endpoint 0 of $\gamma_{1,+}$ is sent to 1). Let ${f}^{\gamma}_{1,+}(t)=\mathds{1}_{\left[0, 1\right]}(t) f(\gamma_{1,+}(t))$. For all $z \in D_a$, we have 
\begin{align*}
f_{1, +}(z)=  \frac{1}{i\pi}\int_{\gamma_{1,+}} \frac{f(u)}{u-z} du 
&= \frac{1}{i\pi} \int_0^{1} \frac{f(\gamma_{1,+}(t))}{\gamma_{1,+}(t)-z}\gamma_{1,+}'(t) dt\\ 
&= \frac{1}{i\pi} \int_\R \frac{{f}^{\gamma}_{1,+}(t)}{(1-t)\frac{\pi}{2}(1+i)-z}\left(-\frac{\pi}{2}(1+i)\right) dt  \\
&= \frac{1}{i\pi} \int_\R \frac{{f}^{\gamma}_{1,+}(t)}{t-\alpha_{1,+}(z)} dt \\
&= \left(\mathcal{C} {f}^{\gamma}_{1,+}\right)\left(\alpha_{1,+}(z)\right)
\end{align*}
So, since $P$ does not vanish on $\overline{D_a}$, we obtain 
\begin{align}\label{estim3}
\int_{D_a} \abs{\frac{f_{1,+}(z)}{P(z)}}^2 dA(z) &\leq C\int_{D_a} \abs{{f_{1,+}(z)}}^2 dA(z)\nonumber\\
&=C \int_{D_a} \abs{\left(\mathcal{C} {f}^{\gamma}_{1,+}\right)(\alpha_{1,+}(z))}^2 dA(z) \nonumber\\
&=C_2 \int_{\alpha_{1,+}\left( D_a \right)} \abs{\left(\mathcal{C} {f}^{\gamma}_{1,+}\right)(z)}^2 dA(z),
\end{align}
where we have used in the last step that $\alpha_{1,+}$ is an affine change of variable with constant jacobian.
As already written, $\alpha_{1,+}\left( D_a \right)$ is a square in the upper-half plane with a segment of the real line as one of its sides. We will next appeal to the following regularity result of the Cauchy transform
which is essentially a combination of a result by Calderon-Zygmund \cite[Thm 2, p.100]{CZ} on the boundedness of the Cauchy transform for compactly supported functions from $L\log L$ to $H^1$, and a result by Hardy-Littlewood on inclusion between $H^1$ and $A^2$ on the disk.

\begin{prop}
\label{prop1}
Let $f \in L\log^{+}\!L(\R)$ have compact support. Let $\Omega$ be a square in the upper-half plane one side of which is a segment $I \subset \R$. 
Then the Cauchy transform $\mathcal{C}f$ belongs to $A^2(\Omega)$. 
\end{prop}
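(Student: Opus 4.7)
The strategy rests on two classical ingredients: Hardy--Littlewood's embedding $H^1(\mathbb{D})\subset A^2(\mathbb{D})$ and Zygmund's local $L^1$-boundedness of the Hilbert transform for compactly supported $L\log^+\!L$ functions. First, I would reduce the claim to proving $\mathcal{C}f\in E^1(\Omega)$. Since $\Omega$ is a Jordan domain with rectifiable boundary, a conformal map $\varphi\colon\mathbb{D}\to\Omega$ gives the equivalence $F\in E^1(\Omega)$ if and only if $(F\circ\varphi)\varphi'\in H^1(\mathbb{D})$, as recalled in the paper. Applying Hardy--Littlewood to $(F\circ\varphi)\varphi'$ and the change of variables $z=\varphi(w)$, whose Jacobian equals $|\varphi'(w)|^2$, yields $\int_\Omega|F|^2\,dA<\infty$. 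Thus $E^1(\Omega)\subset A^2(\Omega)$, and it suffices to prove $\mathcal{C}f\in E^1(\Omega)$.

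To analyse the boundary values of $\mathcal{C}f$, I would split $\partial\Omega$ into the bottom side $I$ and the three sides $J$ lying in $\mathbb{C}^+$. On $J$, the denominator $t-z$ is bounded away from zero uniformly in $t\in\supp f\subset\mathbb{R}$ (except at the two endpoints of $I$, a set of measure zero on $\partial\Omega$), so $\mathcal{C}f$ extends continuously and remains bounded on $J$. On $I$, the Sokhotski--Plemelj formula identifies the non-tangential boundary limit of $\mathcal{C}f$ with $f(x)+iHf(x)$ up to a multiplicative constant, where $H$ is the Hilbert transform. Since $L\log^+\!L$ embeds into $L^1$ on bounded sets, $f\in L^1(I)$; Zygmund's theorem on compactly supported $L\log^+\!L$ functions yields $Hf\in L^1_{\mathrm{loc}}(\mathbb{R})$ and in particular $Hf\in L^1(I)$. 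Hence the boundary trace of $\mathcal{C}f$ lies in $L^1(\partial\Omega)$. Finally, I would realise $\mathcal{C}f$ as the Cauchy integral of its own boundary trace: applying Cauchy's theorem on the shrunken squares $\Omega_\varepsilon=\{z\in\Omega\,:\,\mathrm{dist}(z,\partial\Omega)>\varepsilon\}$ and letting $\varepsilon\downarrow 0$, the contributions from $J$ converge by uniform continuity, while the contribution from $I$ converges in $L^1(I)$ thanks to $P_\varepsilon*f\to f$ in $L^1(I)$ and domination of $\mathcal{C}f(\cdot+i\varepsilon)$ by the maximal Hilbert transform (itself in $L^1_{\mathrm{loc}}$ by Zygmund). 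Classical Smirnov theory then gives $\mathcal{C}f\in E^1(\Omega)$.

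The main obstacle is the $L^1$-control on $I$: the Hilbert transform is not bounded from $L^1$ to $L^1$, so the $L\log^+\!L$ hypothesis is used in an essential way, both to identify the boundary values of $\mathcal{C}f$ through Zygmund's theorem and to justify passing to the limit in the Cauchy representation via the maximal Hilbert transform. Once this step is in hand, the Hardy--Littlewood embedding transferred to $\Omega$ closes the argument in a routine fashion.
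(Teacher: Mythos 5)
Your overall strategy is the same as the paper's: reduce to showing $\mathcal{C}f\in E^1$ of a square via uniform $L^1$ control of $\mathcal{C}f$ on horizontal lines (using the $L\log^+\!L$ hypothesis through Zygmund/Calder\'on--Zygmund), then transfer to $A^2$ by a conformal map and the Hardy--Littlewood embedding $H^1(\D)\subset A^2(\D)$. That reduction, and the change of variables showing $E^1(\Omega)\subset A^2(\Omega)$, are exactly as in the paper and are fine. However, two steps in your version have genuine gaps.

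First, you work on $\Omega$ itself and claim that on the three upper sides $J$ the kernel $1/(t-z)$ is uniformly bounded away from zero for $t\in\supp f$, so that $\mathcal{C}f$ is bounded on $J$. This fails at the two bottom corners of the square: the vertical sides of $\Omega$ meet $\R$ at the endpoints of $I$, and nothing prevents $\supp f$ from containing neighbourhoods of these endpoints (indeed in the application $\supp f\subset I=[0,1]$ and may reach the endpoints). For $f=\mathds{1}_{[0,1]}$, $\mathcal{C}f(z)$ blows up logarithmically as $z$ approaches a corner along a vertical side, so ``bounded on $J$'' is false, and the uniform $L^1$ bound on the approximating curves near the corners is not justified by your argument. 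The paper sidesteps this by proving $\mathcal{C}f\in E^1(\Omega_L)$ for an \emph{enlarged} square $\Omega_L$ whose base $[-L,L]$ contains both $I$ and $\supp f$ in its interior, so that the three non-base sides of the approximating squares stay at positive distance from $\supp f$; one then passes to $A^2(\Omega_L)\subset A^2(\Omega)$, where restriction is trivial (unlike for $E^1$). Second, your control on $I$ rests on the assertion that the \emph{maximal} Hilbert transform of a compactly supported $L\log^+\!L$ function is locally integrable. The Calder\'on--Zygmund theorem invoked by the paper gives $\int_S|\tilde f_\lambda|\le A_S\int|f|(1+\log^+|f|)+B_S$ uniformly in $\lambda$, which bounds each truncation but not the supremum over $\lambda$; integrability of $\sup_\lambda|\tilde f_\lambda|$ is a strictly stronger statement requiring a separate (Cotlar-type) argument. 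The paper instead uses the pointwise estimate $|\mathcal{Q}f(x+iy)-\tilde f_y(x)|\le C\,Mf(x)$ together with $Mf\in L^1_{\mathrm{loc}}$ for compactly supported $L\log L$ functions, which combined with the uniform-in-$\lambda$ Calder\'on--Zygmund bound yields $\sup_{y>0}\int_{-L}^{L}|\mathcal{C}f(x+iy)|\,dx<\infty$. Note also that once this uniform bound is available, the definition of $E^1$ via uniformly $L^1$-bounded Jordan curves concludes directly; your final limiting argument reconstructing $\mathcal{C}f$ from its boundary trace is unnecessary and is precisely where the unproven maximal-function claim would be needed.
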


\begin{rmk}
Note that in this proposition, we do not need to assume any link between the (compact) support of $f$ and the segment $I$. However, we will apply later on the result for the case when the support of $f$ is included in $I$ (and $I=[0,1]$).
\end{rmk}


We start with a first intermediate result.


\begin{lemme}\label{LemInt1}
Let $f\in L\log^+ L(\R)$ having compact support. Then
the Cauchy Transform $\mathcal{C} f$ satisfies:
$$
\sup_{y>0}\  \int_{-L}^L |\mathcal{C}f(x+iy)|dx < +\infty
$$
\end{lemme}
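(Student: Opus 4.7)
The natural approach is to split the Cauchy transform into its real and imaginary parts --- the Poisson integral and the conjugate Poisson integral of $f$ --- and to bound each piece in $L^1((-L,L))$ uniformly in $y>0$. Using $\frac{1}{t-x-iy}=\frac{(t-x)+iy}{(t-x)^2+y^2}$ one gets
$$
\mathcal{C}f(x+iy)=(P_y*f)(x)+i(Q_y*f)(x),
$$
where $P_y(u)=\frac{1}{\pi}\frac{y}{u^2+y^2}$ and $Q_y(u)=\frac{1}{\pi}\frac{u}{u^2+y^2}$ are the Poisson and conjugate Poisson kernels on $\C^+$.

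\textbf{Bounding the two pieces.} The Poisson piece is immediate: since $\|P_y\|_{L^1(\R)}=1$ for every $y>0$, Young's inequality gives
$$
\int_{-L}^L|(P_y*f)(x)|\,dx\le\|P_y*f\|_{L^1(\R)}\le\|f\|_{L^1(\R)}<+\infty,
$$
the last quantity being finite because $f$ is compactly supported and $L\log^+L\subset L^1_{\mathrm{loc}}(\R)$. The conjugate Poisson piece is the heart of the argument: I would appeal to the fact that for $f\in L\log^+L(\R)$ with compact support, the maximal conjugate function
$$
Q^*f(x):=\sup_{y>0}|(Q_y*f)(x)|
$$
belongs to $L^1_{\mathrm{loc}}(\R)$. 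Granting this, Fatou's lemma yields
$$
\sup_{y>0}\int_{-L}^L|(Q_y*f)(x)|\,dx\le\int_{-L}^L Q^*f(x)\,dx<+\infty,
$$
which combined with the Poisson estimate proves the lemma.

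\textbf{Main obstacle.} The genuinely nontrivial ingredient is the local integrability of $Q^*f$. A straightforward Fubini computation is not enough: the inner integral $\int_{-L}^L|t-x|/[(t-x)^2+y^2]\,dx$ blows up like $\log(1/y)$ as $y\to 0^+$, so the uniform bound has to exploit the sign cancellation built into $Q_y$. The standard justification is that an $L\log^+L$-function with compact support lies in the real Hardy space $H^1_{\mathrm{Re}}(\R)$ --- by Stein's theorem, $f\in L\log^+L$ (compactly supported) implies $Mf\in L^1(\R)$, which by the Fefferman--Stein characterization places $f$ in $H^1_{\mathrm{Re}}(\R)$ --- and for functions in this space the radial maximal functions of both the harmonic extension and of its conjugate are integrable. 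Equivalently, one may transfer the maximal form of Zygmund's $L\log L\to L^1$ theorem on the torus $\T$ to the real line via a conformal map sending the support of $f$ into a fixed arc. Once $Q^*f\in L^1_{\mathrm{loc}}(\R)$ is in hand, the remainder of the proof is just the two estimates above.
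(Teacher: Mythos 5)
Your decomposition of $\mathcal{C}f$ into the Poisson and conjugate Poisson integrals, and your treatment of the Poisson part via $\|P_y\|_{L^1}=1$, coincide with the paper's. The gap is in the justification of the key claim that the maximal conjugate function $Q^*f=\sup_{y>0}|Q_y*f|$ is locally integrable. The chain ``$f\in L\log^+\!L$ compactly supported $\Rightarrow Mf\in L^1(\R)\Rightarrow f\in H^1_{\mathrm{Re}}(\R)$'' is false: $Mf$ is \emph{never} globally integrable for a nonzero $f$ (it decays only like $|x|^{-1}$ at infinity when $\int f\neq 0$), and Stein's theorem only gives $Mf\in L^1_{\mathrm{loc}}$; correspondingly, membership in $H^1_{\mathrm{Re}}(\R)$ forces $\int_\R f=0$, so for instance $f=\mathds{1}_{[0,1]}$ is a compactly supported $L\log^+\!L$ function that does not lie in $H^1_{\mathrm{Re}}(\R)$. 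So the route through the Fefferman--Stein characterization does not work as stated. (The conclusion $Q^*f\in L^1_{\mathrm{loc}}$ is in fact true --- it follows, say, from the pointwise bound $Q^*f\le H^*f+C\,Mf$ with $H^*$ the maximal truncated Hilbert transform, which maps $L\log^+\!L$ into $L^1$ on sets of finite measure by Yano-type extrapolation from its weak $(1,1)$ and $L^2$ bounds --- but that is precisely the nontrivial input you would have to supply.)

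Note also that the full maximal function is more than the lemma requires: you only need $\sup_{y>0}\int_{-L}^L|Q_y*f|\,dx<\infty$, i.e.\ the supremum \emph{outside} the integral. This is how the paper proceeds: it uses the classical pointwise estimate $|(Q_y*f)(x)-\tilde f_y(x)|\le C\,Mf(x)$, where $\tilde f_y$ is the truncated Hilbert transform, together with (i) Stein's local $L\log L$ bound for $Mf$ and (ii) the Calder\'on--Zygmund theorem asserting $\int_S|\tilde f_\lambda|\le A_S\int_\R|f|(1+\log_+|f|)+B_S$ uniformly in $\lambda$. That yields the uniform-in-$y$ bound on $\int_{-L}^L|Q_y*f|$ directly, with no need to place $f$ in $H^1_{\mathrm{Re}}(\R)$ or to control a maximal function of the conjugate integral. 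If you replace your $H^1_{\mathrm{Re}}$ step by this comparison (or by a correctly transferred maximal form of Zygmund's theorem on $\T$), the rest of your argument goes through.
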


This result is certainly known to the experts in harmonic analysis. We include its proof for convenience of the reader. It is essentially based on the following theorem by Calderon and Zygmund  \cite[Thm 2 p.100]{CZ}. Let $\tilde{f}_{\lambda}(x)=\int_{|x-y|>1/\lambda} f(y)/(x-y)dy$. Note that 
$\lim_{\lambda\to 0}\tilde{f}_{\lambda}(x)$ corresponds to the Hilbert transform of $f$.

\begin{thm}[Calderon-Zygmund]\label{CZthm}
If $|f|(1+\log_+|f|)$ is integrable over $\R$, then $\tilde{f}_{\lambda}$ is integrable over every set $S$ of finite measure. Moreover,
$$
 \int_S |\tilde{f}_{\lambda}|dx\le A_S\int_{\R}|f|(1+\log_+|f|)dx+B_S,
$$
where $A_S$ and $B_S$ are constants depending only on $S$, but neither on $f$ nor on $\lambda$.
\end{thm}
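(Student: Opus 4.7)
The plan is to derive Theorem \ref{CZthm} from a Marcinkiewicz-style interpolation between a uniform-in-$\lambda$ weak-type $(1,1)$ bound and a uniform-in-$\lambda$ strong-type $(2,2)$ bound for the truncated Hilbert transform $\tilde f_\lambda$. The interpolation is carried out level-by-level on $f$ itself, which is precisely what turns an $L^1$-input into an $L\log L$-input and a weak-$(1,1)$ target into a strong $L^1_{\mathrm{loc}}$-target.

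First I would record the two uniform ingredients. The weak-$(1,1)$ estimate $|\{|\tilde f_\lambda|>s\}|\le C\|f\|_1/s$ follows from a Calderón-Zygmund decomposition of $f$ at height $s$: the good part $g$ with $\|g\|_\infty\le Cs$ is controlled via the $L^2$-bound (using $\|g\|_2^2\le Cs\|f\|_1$), and the bad part $b=\sum_j b_j$ with $\int b_j=0$ on disjoint cubes $Q_j$ is controlled via the Hörmander smoothness of $K_\lambda(u):=u^{-1}\mathbf{1}_{|u|>1/\lambda}$ off the dilate $3\bigcup_j Q_j$, with constants independent of $\lambda$. The strong $(2,2)$-bound $\|\tilde f_\lambda\|_2\le C\|f\|_2$ reduces to showing that the Fourier multiplier associated with $K_\lambda$ is uniformly bounded in $\lambda$.

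Next, for each $s>0$, I would decompose $f=h_s+h^s$ with $h_s:=f\,\mathbf{1}_{\{|f|\le s\}}$ and $h^s:=f\,\mathbf{1}_{\{|f|>s\}}$, so that by Chebyshev applied to $\tilde h_{s,\lambda}$ in $L^2$ and the weak-$(1,1)$-bound applied to $\tilde h^s_{\,\lambda}$,
$$
\bigl|\{|\tilde f_\lambda|>2s\}\bigr|\le \frac{C}{s^2}\|h_s\|_2^2+\frac{C}{s}\|h^s\|_1.
$$
For a set $S$ of finite measure, layer-cake with a cutoff at $s=1$ gives
$$
\int_S|\tilde f_\lambda|\,dx\le |S|+C\!\int_1^\infty\!\Bigl(\tfrac{1}{s^2}\!\int_{\{|f|\le s\}}\!|f|^2\,dy+\tfrac{1}{s}\!\int_{\{|f|>s\}}\!|f|\,dy\Bigr)ds,
$$
after which Fubini on the two inner integrals identifies the first piece with $\int_\R\min(|f|,|f|^2)\,dy\le \|f\|_1$ (via $\int_{\max(1,|f(y)|)}^\infty s^{-2}ds$) and the second with $\int_\R |f|\log^+|f|\,dy$ (via $\int_1^{|f(y)|}s^{-1}ds$). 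This yields the asserted bound with constants $A_S,B_S$ depending only on $|S|$ and independent of $f$ and $\lambda$.

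The main obstacle will be the uniformity in $\lambda$ of the two input estimates — the interpolation step itself is essentially routine. The weak-$(1,1)$ bound requires the Hörmander cancellation condition for $K_\lambda$ to hold with a constant independent of $\lambda$, which takes some care in the range $|x-y|\approx 1/\lambda$ where the truncation activates. The cleanest alternative is Cotlar's inequality, which pointwise dominates $|\tilde f_\lambda|$ by the maximal truncated Hilbert transform $\sup_\mu|\tilde f_\mu|$ plus a constant times the Hardy-Littlewood maximal function $Mf$; both operators are known to be weak-$(1,1)$ with absolute constants, yielding the required $\lambda$-free weak bound on $\tilde f_\lambda$ directly.
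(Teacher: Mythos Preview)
The paper does not supply its own proof of this statement: Theorem~\ref{CZthm} is quoted verbatim from Calder\'on--Zygmund \cite[Thm~2, p.~100]{CZ} and used as a black box in the proof of Lemma~\ref{LemInt1}. So there is nothing in the paper to compare your argument against.

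That said, your proposal is a correct and standard route to the result. The level-set decomposition $f=h_s+h^s$, the combination of the uniform weak-$(1,1)$ and strong-$(2,2)$ bounds, the layer-cake estimate over $S$ with a cutoff at $s=1$ contributing a multiple of $|S|$ to $B_S$, and the two Fubini computations yielding $\|f\|_1$ and $\int|f|\log^+|f|$ respectively are all in order. Your remark that the only genuine issue is the $\lambda$-uniformity of the two input bounds is exactly right, and both cures you mention (checking the H\"ormander condition for the truncated kernel directly, or invoking Cotlar's inequality to pass to the maximal truncation) work. One cosmetic point: the contribution from $0\le t\le 2$ in the layer-cake gives $2|S|$ rather than $|S|$, but this is absorbed into $B_S$ anyway.
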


\begin{proof}[Proof of Lemma \ref{LemInt1}]
For $y>0$,
let $P_y(x)= \frac{y}{\pi(x^2 + y^2)}$ and $Q_y(x) = \frac{x}{\pi(x^2 + y^2)}$ be the Poisson  and the conjugate Poisson kernels ($Q_0$ corresponds to the kernel of the Hilbert transform). Then we have
$$ \forall z \in \C^+, \ \mathcal{C}f(z) = \mathcal{P}f(z)+ i \mathcal{Q}f(z)$$
where we have written $\mathcal{P}f(x+iy) = (P_y * f)(x)$ and $\mathcal{Q}f(x+iy)=(Q_y * f)(x)$. So it suffices to show
$$ \sup_{y>0}\  \int_{-L}^L |\mathcal{P}f(x+iy)|dx < +\infty \quad \text{and} \quad \sup_{y>0}\  \int_{-L}^L |\mathcal{Q}f(x+iy)|dx < +\infty. $$
The first inequality is clear from classical properties of the Poisson kernel (for this it is
even enough that $f\in L^1(\R)$, see \cite[Thm 3.1]{Ga}). 
Consider the second inequality. Recall the following estimate (see for example \cite[p. 105]{Ga})
$$ \forall y >0, \forall x \in \R, \ \abs{\mathcal{Q}_yf(x+iy) - \widetilde{f}_y(x)} \leq C Mf(x)$$ 
where $Mf$ is the Hardy-Littlewood Maximal function and $\widetilde{f_y}$ is defined by 
$$\forall x \in \R, \ \widetilde{f}_y(x) = \int_{\abs{t-x} > y} Q_y(x-t) f(t) dt.$$ 
This, together with a classical result on the regularity of $Mf$(see \cite[p. 23]{Ga}) 
and Theorem \ref{CZthm} above yields the desired result.
\end{proof}

We need some more notation.
Let $L >0$ such that $I \subset \left]-\frac{L}{2}, \frac{L}{2}\right[$ and $\supp f \subset \left]-\frac{L}{2},\frac{L}{2}\right[$. Denote by $\Omega_L$ the square contained in $\C^+$ with one side being the segment $[-L, L]$. 

\begin{lemme}\label{LemNouv}
Under the conditions of the proposition, we have $\mathcal{C}f \in E^1(\Omega_L)$.
\end{lemme}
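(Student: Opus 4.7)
The plan is to verify the defining condition of $E^1(\Omega_L)$ directly, by exhibiting an exhausting sequence of rectifiable Jordan curves $(\gamma_n)_n$ inside $\Omega_L$ along which $\mathcal{C}f$ has uniformly bounded $L^1$-norm. Holomorphy of $\mathcal{C}f$ on $\Omega_L$ is automatic, since the Cauchy integral of a compactly supported $L^1$ function is holomorphic off $\R$; so only the integral estimate needs work.

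Writing $\Omega_L$, up to translation, as the square $(-L,L)\times(0,2L)$ whose ``bottom'' side lies on $\R$, I would take $\gamma_n$ to be the boundary of the rectangle $(-L,L)\times(1/n,2L)$. These are rectifiable Jordan curves eventually surrounding every compact subset of $\Omega_L$. I would split $\gamma_n$ into four pieces and estimate them separately. On the top side $\{x+2iL:x\in[-L,L]\}$ and on the vertical sides $\{\pm L+iy:y\in[1/n,2L]\}$, the running point $z$ stays at distance at least $L/2$ from $\supp f\subset(-L/2,L/2)$, so
$$
|\mathcal{C}f(z)|\le\frac{1}{\pi}\int_\R\frac{|f(t)|}{|t-z|}\,dt\le\frac{2}{\pi L}\|f\|_{L^1(\R)},
$$
and integrating against arc-length gives a contribution bounded by a constant independent of $n$ (the total length of these three sides being at most $6L$). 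On the remaining bottom side $\{x+i/n:x\in[-L,L]\}$, I would invoke Lemma \ref{LemInt1}, which yields
$$
\int_{-L}^{L}|\mathcal{C}f(x+i/n)|\,dx\le\sup_{y>0}\int_{-L}^{L}|\mathcal{C}f(x+iy)|\,dx<+\infty,
$$
again uniformly in $n$. Summing the two types of bounds gives $\sup_n\int_{\gamma_n}|\mathcal{C}f(z)|\,|dz|<\infty$, which is exactly the definition of $\mathcal{C}f\in E^1(\Omega_L)$.

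All the analytic effort sits in Lemma \ref{LemInt1} (and hence ultimately in the Calderón--Zygmund theorem), which controls $\mathcal{C}f$ on horizontal slices near the real axis; the present lemma is then essentially a geometric repackaging. The only point demanding a little care is verifying that the three ``upper'' sides of $\gamma_n$ are really separated from $\supp f$, but this is built into the hypothesis $\supp f\subset(-L/2,L/2)$, which is exactly why $L$ was chosen that way. No further subtlety is anticipated.
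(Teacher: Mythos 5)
Your proof is correct and follows essentially the same route as the paper's: the bottom side near $\R$ is controlled by Lemma \ref{LemInt1}, and the remaining three sides by the uniform distance of the curve to $\supp f$. The only cosmetic difference is that your rectangles $(-L,L)\times(1/n,2L)$ have three sides lying on $\partial\Omega_L$ rather than strictly inside $\Omega_L$; shrinking them slightly on all sides, as the paper does with its $\varepsilon$-squares $\omega_\varepsilon$, makes them bona fide Jordan curves in $\Omega_L$ without affecting any estimate.
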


\begin{proof}[Proof of Lemma \ref{LemNouv}]
In order to prove $\mathcal{C}f \in E^1(\Omega_L)$, pick $(\omega_\varepsilon)_{0< \varepsilon <\varepsilon_0 } $, $\varepsilon_0 < L/2$, a sequence of rectifiable Jordan curves given by the sides of the squares contained in $\Omega_L$  one side  of which is 
$\omega_{\varepsilon,0} = [-L+\varepsilon, L-\varepsilon] + i \varepsilon$ and 
$\omega_{\varepsilon,1}$ corresponds to the remaining three sides of the square (see Figure \ref{figure2}). Let $\omega_\varepsilon = \omega_{\varepsilon,0} \vee \omega_{\varepsilon, 1}$
(concatenation of the two Jordan curves, orientated counterclockwise). 
Then for any $0 < \varepsilon \leq \varepsilon_0$, we have
$ d(\omega_{\varepsilon, 1}, \supp f) >0$. 
Thus, from the very definition of the Cauchy transform and triangular inequality,
$$ \sup_{0 < \varepsilon < \varepsilon_0} \int_{\omega_{\varepsilon, 1}} |\mathcal{C}f| |dz| < + \infty. $$ 
It remains to show that
$$ \sup_{0 < \varepsilon < \varepsilon_0} \int_{\omega_{\varepsilon, 0}} |\mathcal{C}f| |dz| = \sup_{0 < \varepsilon < \varepsilon_0} \int_{-L+\varepsilon}^{L-\epsilon} |\mathcal{C}f(t+i\varepsilon)| dt < + \infty.$$
Using Lemma \ref{LemInt1}, we conclude that $\mathcal{C}f \in E^1(\Omega_L)$ 
\end{proof}


As mentioned above, the other ingredient in the proof of Proposition \ref{prop1} is the following interesting result due to Hardy and Littlewood (see \cite[thm31]{HL}, see also \cite{Vu} or \cite[Thm4.11, p. 282]{QQ} for a more elementary proof).
\begin{thm}[Hardy-Littlewood]
\label{HL}
The Hardy space $H^1(\D)$ embeds continuously into $A^2(\D)$. 
\end{thm}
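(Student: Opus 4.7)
The plan is to pass through Taylor coefficients and reduce to Hardy's inequality for $H^1$. Given $f \in H^1(\D)$ with Taylor expansion $f(z) = \sum_{n \geq 0} a_n z^n$, the Bergman norm can be computed in polar coordinates using the orthogonality of the monomials $(z^n)_{n \geq 0}$ in $L^2(\D, dA)$:
$$
\norme{f}_{A^2(\D)}^2 = 2\pi \sum_{n \geq 0} |a_n|^2 \int_0^1 r^{2n+1}\,dr = \pi \sum_{n \geq 0} \frac{|a_n|^2}{n+1}.
$$
It therefore suffices to control $\sum_{n\geq 0} |a_n|^2/(n+1)$ by $\norme{f}_{H^1}^2$.

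Two ingredients will be combined. The easy one is the Cauchy coefficient estimate: for every $0 < r < 1$, the formula $a_n = (2\pi r^n)^{-1}\int_0^{2\pi} f(re^{i\theta})e^{-in\theta}\,d\theta$ yields $|a_n| \leq r^{-n}\norme{f}_{H^1}$, and letting $r \to 1^-$ gives $|a_n| \leq \norme{f}_{H^1}$ for every $n \geq 0$. The non-trivial ingredient is the classical Hardy inequality: there exists an absolute constant $C > 0$ such that every $f \in H^1(\D)$ satisfies
$$
\sum_{n \geq 0} \frac{|a_n|}{n+1} \leq C \norme{f}_{H^1}.
$$
Combining the two yields
$$
\sum_{n \geq 0} \frac{|a_n|^2}{n+1} \leq \norme{f}_{H^1} \cdot \sum_{n \geq 0} \frac{|a_n|}{n+1} \leq C\,\norme{f}_{H^1}^2,
$$
and the theorem follows.

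The main obstacle, and the real content, is Hardy's inequality itself. The approach I would take uses the Fejér-Riesz factorization: after dividing out a Blaschke product one may assume $f$ is zero-free on $\D$, so that $f = g^2$ for some $g \in H^2(\D)$ with $\norme{g}_{H^2}^2 = \norme{f}_{H^1}$. Writing the Taylor coefficients $a_n$ of $f$ as the convolution $a_n = \sum_{k=0}^n b_k b_{n-k}$ of those of $g$, one obtains
$$
\sum_{n \geq 0} \frac{|a_n|}{n+1} \leq \sum_{k, j \geq 0} \frac{|b_k|\,|b_j|}{k+j+1} \leq \pi \sum_{n \geq 0} |b_n|^2 = \pi \norme{g}_{H^2}^2 = \pi \norme{f}_{H^1},
$$
where the middle inequality is the classical bound (with sharp constant $\pi$) for the Hilbert matrix $(1/(k+j+1))_{k, j \geq 0}$ acting on $\ell^2$. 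This completes the chain of reductions.
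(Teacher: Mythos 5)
The paper does not actually prove this statement: it is quoted as a known theorem with references to the original paper of Hardy and Littlewood \cite{HL} and to \cite{Vu}, \cite{QQ} for more elementary proofs. Your argument therefore has to be judged on its own, and it is a correct and self-contained route: computing $\norme{f}_{A^2(\D)}^2=\pi\sum_{n\ge 0}\abs{a_n}^2/(n+1)$ by orthogonality, bounding one factor of $\abs{a_n}$ by $\norme{f}_{H^1}$ via the Cauchy estimate, and absorbing the remaining sum $\sum_{n\ge 0}\abs{a_n}/(n+1)$ by Hardy's inequality is a standard and efficient way to get the continuous embedding, with an explicit constant. One step needs a (routine) repair. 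In your proof of Hardy's inequality you write that ``after dividing out a Blaschke product one may assume $f$ is zero-free, so that $f=g^2$''. As stated this reduction is not legitimate, because Hardy's inequality concerns the Taylor coefficients of $f$ itself, and these are not the coefficients of the zero-free factor. The correct formulation is: write $f=Bg$ with $B$ a Blaschke product and $g$ zero-free with $\norme{g}_{H^1}=\norme{f}_{H^1}$, then $g=h^2$ with $h\in H^2$, so that $f=(Bh)\cdot h$ is a product of two $H^2$ functions each of $H^2$-norm $\norme{f}_{H^1}^{1/2}$. Your convolution and Hilbert-matrix computation then goes through verbatim with the two coefficient sequences of $Bh$ and of $h$ in place of $(b_k)$ used twice, since Hilbert's inequality $\sum_{k,j\ge 0}\abs{b_k}\abs{c_j}/(k+j+1)\le\pi\norme{b}_{\ell^2}\norme{c}_{\ell^2}$ holds for two different sequences. (Note that for the embedding itself, as opposed to the coefficient inequality, the Blaschke reduction would be directly legitimate, since $\abs{f}\le\abs{g}$ pointwise on $\D$.) With this fix your proof is complete.
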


We are now in a position to prove Proposition \ref{prop1}.

\begin{proof}[Proof of Proposition \ref{prop1}]
In view of Lemma \ref{LemNouv}, we already know that $\mathcal{C}f \in E^1(\Omega_L)$.
Now, if $\varphi : \D \to \Omega_L$ is a conformal mapping, then we will have $\left(\mathcal{C}f \circ \varphi \right)  \varphi' \in H^1(\D)$. From Theorem \cite{HL} we obtain
$\left(\mathcal{C}f \circ \varphi \right) \varphi' \in A^2(\D)$, or equivalently, by simple change of variable, $\mathcal{C}f \in A^2(\Omega_L)$. Since $\Omega \subset \Omega_L$, we obtain $\mathcal{C}f \in A^2(\Omega)$ which is what we want to prove.
\end{proof}

From the preceding discussions we can now deduce the claim \eqref{claim2}. Indeed, 
recall from \eqref{estim3} that
$$
\int_{D_a} \abs{\frac{f_{1,+}(z)}{P(z)}}^2 dA(z) \leq C\int_{\alpha_{1,+}\left( D_a \right)} \abs{\left(\mathcal{C} {f}^{\gamma}_{1,+}\right)(z)}^2 dA(z).
$$
Clearly, when $f\in L\log L$ with compact support, the same will be true for $f_{1,+}^{\gamma}$
(which is essentially a truncation of $f$ composed with a rotation/translation). From Proposition
\ref{prop1} (with  $\Omega=\alpha_{1,+}\left( D_a \right)$ being a unit square in the upper half plane with base 
on the real line we deduce \eqref{claim2} (the argument is the same for $f_{1,-}$).

\begin{proof}[Proof of Theorem \ref{thm1}]
By \eqref{eq2} it is enough to show that $f/P\in A^2(\Delta)+A^2(\pi-\Delta)$.
The decomposition will be given by $F_1=(f_{1,+}+f_{1,-})/P$ and $F_2=(f_{2,+}+f_{2,-})/P$.
By \eqref{claim1} we have $F_1\in A^2(\Delta\setminus D_a)$, and \eqref{claim2} implies
that $F_1\in A^2(D_A)$. The case $F_2$ is treated in exactly the same way.
\end{proof}

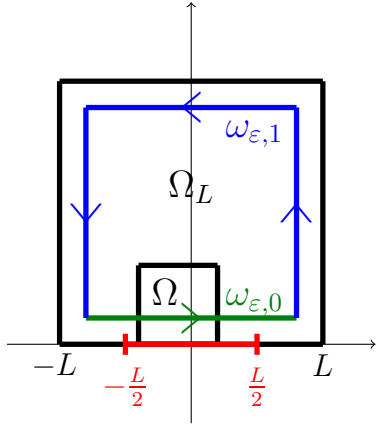
\begin{figure}[h]

\begin{center}
\definecolor{zzttqq}{rgb}{0.,0.5,0.05}
\begin{tikzpicture}[ scale=1.4]
\draw  [->](-1.75,0)-- (1.75,0);
\draw  [->](0,-0.75)-- (0,3.25);

\draw [line width=2.pt] (-1.25,0.)-- (1.25,0.);
\draw [line width=2.pt] (1.25,0.)-- (1.25,2.5);
\draw [line width=2.pt] (1.25,2.5)-- (-1.25,2.5);
\draw [line width=2.pt] (-1.25,2.5)-- (-1.25,0.);

\draw [line width=2.pt] (-0.5,0.)-- (0.25,0.);
\draw [line width=2.pt] (0.25,0.)-- (0.25,0.75);
\draw [line width=2.pt] (0.25,0.75)-- (-0.5,0.75);
\draw [line width=2.pt] (-0.5,0.75)-- (-0.5,0.);

\draw [line width=2.pt, color=zzttqq] (-1,0.25)-- (1,0.25)node[midway,sloped,xscale=1, yscale=2]{$>$};
\draw [line width=2.pt, color=blue] (1,0.25)-- (1,2.25) node[midway,sloped,xscale=1, yscale=2]{$>$};
\draw [line width=2.pt, color=blue] (1,2.25)-- (-1,2.25) node[midway,sloped,xscale=1, yscale=2]{$<$};
\draw [line width=2.pt, color=blue] (-1,2.25)-- (-1,0.25) node[midway,sloped,xscale=1, yscale=2]{$>$};

\draw [line width=2.pt, color=red] (-0.625,0)-- (0.625,0.);
\draw [line width=2.pt, color=red] (-0.625,0.1)-- (-0.625,-0.1)node[below] {\normalsize{$-\frac{L}{2}$}};
\draw [line width=2.pt, color=red] (0.625,0.1)-- (0.625,-0.1) node[below] {\normalsize{$\frac{L}{2}$}};

\begin{scriptsize};
\draw (-0.25,0.5) node {{\large{$\Omega$}}};
\draw (0.,1.5) node {{\large{$\Omega_L$}}};
\draw (0.6,0.4) node[color=zzttqq] {{\large{$\omega_{\varepsilon,0}$}}};
\draw (0.6,2) node[color=blue] {{\large{$\omega_{\varepsilon,1}$}}};
\draw (-1.3, -0.2) node {{\normalsize{$-L$}}};
\draw (1.25, -0.2) node {{\normalsize{$L$}}};
\end{scriptsize}
\end{tikzpicture}
\end{center}
\caption{The squares $\Omega$ et $\Omega_L$, and the path $\omega_\varepsilon$.} 
\label{figure2}
\end{figure}

\section{Proof of Theorem \ref{thm4} \label{subsection3.2}}

As already mentioned, we have to solve the First Cousin Problem for Bergman spaces.
We refer the reader to \cite[Thm 9.4.1]{AM} where some of the ideas used below can be found.
The first step into this direction is to construct a partition of unity associated with 
$$
 \Omega=\Delta\cup (\pi-\Delta)=:\Omega_1\cup\Omega_2.
$$
We are thus seeking for positive smooth functions $\varphi_i$, $i=1,2$, such that 
$\supp(\varphi_i)\subset \Omega_i$, $\varphi_1+\varphi_2=1$ on $\Omega$. The existence
of such a partition is of course a general fact. However, for the convenience
of the reader, we give a more explicite construction of $\varphi_i$ which also allows to 
see the optimality of the weight of the Bergman space appearing in Theorem
\ref{thm4}, at least for our method.

\begin{lemme}
\label{lemma1}
Let $z_0=\frac{\pi}{2}+i \frac{\pi}{2}$ and $z_1=\frac{\pi}{2}-i \frac{\pi}{2}$, the upper and lower vertices of $D$. 
Then there exists $\chi_1, \chi_2 \in C^{\infty}(\Omega)$ such that, for $i \in \{1, \, 2\}$, 
\begin{enumerate}
\item $0 \leq \chi_i \leq 1, \ \mathrm{supp}\chi_i \subset \Omega_i $
\item $\chi_1 + \chi_2 \equiv 1$ on $\Omega$
\item $\forall z \in D, \ \left\lvert\frac{\partial \chi_1}{\partial \bar{z}}(z) \right\rvert \leq \frac{C}{|z-z_0||z-z_1|}$
\end{enumerate}
\end{lemme}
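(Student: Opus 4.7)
The strategy is to construct $\chi_1$ explicitly; then $\chi_2:=1-\chi_1$ automatically satisfies (2), and its support condition will reduce to requiring that $\chi_1 \equiv 1$ on $\Omega \setminus (\pi-\Delta)$. Since, as was observed, $\Delta\cap(\pi-\Delta)=D$, the only place where a genuine transition is needed is inside $D$: $\chi_1$ must vanish in a neighborhood of the ``left'' boundary $[0,z_0]\cup[0,z_1]$ of $D$ (which lies in $\partial\Delta$) and equal $1$ in a neighborhood of the ``right'' boundary $[\pi,z_0]\cup[\pi,z_1]$ (which lies in $\partial(\pi-\Delta)$).

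The key observation is that the two defining rays of $\partial\Delta$ and the two defining rays of $\partial(\pi-\Delta)$ cross exactly at $z_0$ and $z_1$. Near $z_0$, these four rays split a punctured neighborhood into four open sectors of opening $\pi/2$: one sits outside $\Omega$ (the ``upper'' sector), one is contained in $\Delta\setminus(\pi-\Delta)$ (where $\chi_1=1$), one in $(\pi-\Delta)\setminus \Delta$ (where $\chi_1=0$), and the fourth one lies inside $D$ and carries the transition. I would therefore choose a branch of $\arg(z-z_0)$ whose cut lies in the ``upper'' sector (the one outside $\Omega$), and set
$$
\chi_1^{(0)}(z)\;=\;\varphi_0\!\left(\arg(z-z_0)\right),
$$
where $\varphi_0:\R\to[0,1]$ is smooth, equals $0$ on the angular interval of the left sector and $1$ on that of the right sector, and interpolates smoothly on the transition sector. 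A completely analogous construction (with a branch cut in the ``lower'' sector outside $\Omega$) gives $\chi_1^{(1)}$ near $z_1$.

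Away from $z_0$ and $z_1$, the cover $\{\Delta,\pi-\Delta\}$ of $\Omega$ is a smooth two-set cover with transversal (or disjoint) boundaries. A standard partition of unity construction (e.g.\ using a signed distance function to $\partial\Delta$ composed with a smooth cutoff) provides a function $\chi_1^{(\infty)}$ on $\Omega\setminus(V_0\cup V_1)$, where $V_j$ is a small disk around $z_j$, with $C^\infty$-bounded derivatives and the correct boundary values $0$ and $1$. The three pieces are then glued via a subordinate smooth partition of unity $\eta_0,\eta_1,\eta_\infty$ of $\C$ (supported respectively in $V_0$, $V_1$, $\C\setminus\overline{V_0\cup V_1}$). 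On the overlaps of the $\eta_j$'s one can arrange that all relevant local models already take the same constant values ($0$ or $1$), so the glued function $\chi_1=\eta_0\chi_1^{(0)}+\eta_1\chi_1^{(1)}+\eta_\infty\chi_1^{(\infty)}$ is smooth on $\Omega$, lies in $[0,1]$, and has support in $\overline\Delta$.

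The derivative estimate follows from a direct computation. A short calculation gives
$$
\partial_{\bar z}\arg(z-z_j)\;=\;\frac{i}{2(\bar z-\bar z_j)},
$$
so
$$
\bigl|\partial_{\bar z}\chi_1^{(j)}(z)\bigr|\;\le\;\frac{\|\varphi_j'\|_\infty}{2\,|z-z_j|}.
$$
Combined with the fact that $\eta_j$ and $\chi_1^{(\infty)}$ have bounded derivatives on their supports, we obtain on $D$ a bound of the form
$$
\bigl|\partial_{\bar z}\chi_1(z)\bigr|\;\le\;C\!\left(\frac{1}{|z-z_0|}+\frac{1}{|z-z_1|}\right)\;\le\;\frac{C'}{|z-z_0|\,|z-z_1|},
$$
the last inequality holding because $|z-z_0|\,|z-z_1|$ is bounded above on the compact set $\overline{D}$. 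This is exactly (3).

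The main technical hurdle is purely organizational: one must choose the branches of $\arg(z-z_j)$ so that their cuts avoid $\Omega$, and arrange the overlap regions of the cutoffs $\eta_j$ to lie where the local models $\chi_1^{(0)},\chi_1^{(1)},\chi_1^{(\infty)}$ all take the same constant value ($0$ or $1$); once this is done, the gluing is automatically smooth and the angular singularities at $z_0,z_1$ are the only source of blow-up in $\partial_{\bar z}\chi_1$, producing exactly the weight announced in (3).
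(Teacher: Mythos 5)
Your construction is correct, but it takes a genuinely different route from the paper's. The paper writes down a single global formula: with $\psi$ a standard cutoff and $\alpha(y)=-\frac{1}{\pi}(y-\frac{\pi}{2})(y+\frac{\pi}{2})$, it sets $\chi_1(x,y)=\psi\bigl(\frac{x+\alpha(y)-\pi/2}{2\alpha(y)}\bigr)$ inside $D$ and extends by the constants $1$ and $0$ outside, so that the transition happens in a lens between two parabolic arcs joining $z_0$ to $z_1$ whose horizontal width at height $y$ is $2\alpha(y)\asymp|y-\frac{\pi}{2}|\,|y+\frac{\pi}{2}|$; since $|x-\frac{\pi}{2}|<\alpha(y)$ there, this width is comparable to $|z-z_0|\,|z-z_1|$, which gives point 3 directly and makes the optimality discussed in Remark \ref{rem3.8} transparent. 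You instead localize the difficulty at the two vertices, where the singularity of $\partial_{\bar z}\chi_1$ is read off the angular variable, and treat the rest by a standard partition-of-unity argument; this yields the (on $D$ equivalent) bound $\frac{1}{|z-z_0|}+\frac{1}{|z-z_1|}$ and is more modular, at the price of a gluing step. Two small points in your write-up deserve care. First, the claim that the local models can be arranged to take the same \emph{constant} values on the overlaps of the $\eta_j$ cannot hold on the part of the overlap lying in the transition zone inside $D$, where at least one model is necessarily non-constant; but this is harmless, since each summand $\eta_j\chi_1^{(j)}$ is smooth and $[0,1]$-valued on its own, and in $\partial_{\bar z}\chi_1=\sum_j(\partial_{\bar z}\eta_j)\chi_1^{(j)}+\sum_j\eta_j\,\partial_{\bar z}\chi_1^{(j)}$ the first sum is bounded, so no matching is actually needed (alternatively one can make the models literally agree, rather than be constant, on the overlaps). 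Second, the inequality $\frac{1}{|z-z_0|}+\frac{1}{|z-z_1|}\le\frac{C'}{|z-z_0|\,|z-z_1|}$ is equivalent to $|z-z_0|+|z-z_1|\le C'$, so the correct justification is that each of the two distances (not their product) is bounded on $\overline{D}$.
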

\begin{proof}
Let $\psi \in C^{\infty}(\R)$ be a cutoff function such that $0\leq \psi \leq 1$, $\psi|_{(-\infty, \, 0]}=0$, $\psi|_{[1, +\infty)}=1$. It is clear that $\psi'$ is bounded on $\R$. 
We will now create an explicit support for $\chi_1$, smaller than $\Delta\setminus D$.
For that, let $\alpha :(-\frac{\pi}{2}, \frac{\pi}{2}) \ni y \mapsto \frac{-1}{\pi}(y-\frac{\pi}{2})(y+\frac{\pi}{2})$ and draw the curves of equations $C_1:x=\alpha(y) + \frac{\pi}{2}$ and $C_2:x=-\alpha(y)+\frac{\pi}{2}$. They are symmetric with respect to the line $x=\frac{\pi}{2}$ and are included in $D$ (see Figure \ref{figure4}). Now, define $\chi_1$ by 
$$\chi_1(x,y)= \psi\left(\frac{x+\alpha(y)- \frac{\pi}{2}}{2\alpha(y)}\right), \quad x, y \in \left(-\frac{\pi}{2}, \frac{\pi}{2}\right)$$ and complete it by $1$ on $\Delta\setminus D$ and
by $0$ on $(\pi-\Delta)\setminus D$. 
Let now $(x,y)\in D$. We set $E$ for the subset of $(x,y)\in D$ contained between
the two curves $C_1$ and $C_2$.
Clearly, when $x<-\alpha(y)+\frac{\pi}{2}$, then $\chi_1(x,y)=0$ on the left half of $D\setminus E$.
Also, when 
$x>\alpha(y)+\frac{\pi}{2}$, then, since $\alpha(y)>0$ on $(-\frac{\pi}{2},\frac{\pi}{2})$,
$$
 \frac{x+\alpha(y)- \frac{\pi}{2}}{2\alpha(y)}>1,
$$
and hence $\chi_1(x,y)=1$ on the right half of $D\setminus E$. This implies that the function $\chi_1$
is $C^{\infty}(\Omega)$, it takes values between $0$ and $1$, is 1 on $\Delta\setminus D$ (actually on $\Delta\setminus E$)
and $0$ on $(\pi-\Delta)\setminus D$ (actually on $(\pi-\Delta)\setminus E$).
If we write $\chi_2= 1-\chi_1$, the points 1. and 2. of the lemma are verified. To obtain the last point, observe that outside $E$ the derivatives of $\chi_1$ vanish and if $(x,y)$ belongs to $E$, we have $|x- \frac{\pi}{2}| < |\alpha(y)|$. The point 3. follows. 
\end{proof}

It can easily be seen from the mean value theorem that we cannot hope for a better estimate
than 3.\ in Lemma \ref{lemma1}, see Remark \ref{rem3.8} below.
\\

We are now in a position to prove Theorem \ref{thm4}. Beforehand, we need to introduce an
auxiliary function $P(z)=1+z^2$. Observe that multiplication by $P$ is an isomorphic
operation on $A^2\left(D, |(z-z_0)(z-z_1)|^{-2}\right)$.
Pick $\varphi\in A^2\left(D, |(z-z_0)(z-z_1)|^{-2}\right)$, and set $\varphi_0=\varphi P$.
Consider the functions
\begin{align*} 
h_1&= \chi_2 \varphi_0\quad \text{on } \Omega_1 \\
h_2 &= - \chi_1 \varphi_0 \quad \text{on } \Omega_2
\end{align*}
They satisfy $h_i \in C^{\infty} (\Omega_i)$ and $\varphi_0=h_1-h_2$ on $D$. To conclude we need to solve a $\overline{\partial}$-problem. For that, note that 
$$
 \frac{\partial h_1}{\partial \bar{z}} = \frac{\partial h_2}{\partial \bar{z}}  = -\varphi_0 \frac{\partial \chi_1}{\partial \bar{z}} \quad \text{on} \ D. 
$$
So we can define a function $v \in \C^{\infty}(\Omega)$ such that

\begin{eqnarray*}
 v = 
\begin{cases} \displaystyle \frac{\partial h_1}{\partial \bar{z}} \text{ on }\Omega_1,\\
  \displaystyle \frac{\partial h_2}{\partial \bar{z}} \text{ on }\Omega_2. 
\end{cases}
\end{eqnarray*}
We need the following Hörmander $L^2$-estimates for the $\bar{\partial}$-equation  \cite[Thm 4.2.1]{Ho}. 
\begin{thm}
Let $U$ be a domain in $\C$ 
and $a >0$. If $f \in L^2_{\mathrm{loc}}(U)$ and 
$$\int_U \abs{f(z)}^2 
(1+\abs{z}^2)^{2-a} dA(z) < + \infty$$ 
then there exists $u \in L^2_{\mathrm{loc}}(U)$ which solves the equation $\frac{\partial u}{\partial \bar{z}} = f$ on $U$ and such that 
$$a\int_U \abs{u(z)}^2 
(1+\abs{z}^2)^{-a} dA(z) \leq \int_U \abs{f(z)}^2 
(1+\abs{z}^2)^{2-a} dA(z).$$
\end{thm}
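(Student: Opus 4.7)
The plan is to recognize the statement as the classical Hörmander $L^2$-estimate for $\bar\partial$ in one complex variable with the specific subharmonic weight $\varphi(z)=a\log(1+|z|^2)$. A direct computation gives $e^{-\varphi(z)}=(1+|z|^2)^{-a}$ and
$$
\partial_z\partial_{\bar z}\varphi(z)=\frac{a}{(1+|z|^2)^{2}},
$$
so that, after pulling out a factor $a$, the inequality to prove rewrites in the familiar form
$$
\int_U |u|^2\,e^{-\varphi}\,dA \;\leq\; \int_U |f|^2(\partial_z\partial_{\bar z}\varphi)^{-1}\,e^{-\varphi}\,dA.
$$
In complex dimension one every open $U\subset\C$ is automatically pseudoconvex and there are no $(0,2)$-forms, so no compatibility condition $\bar\partial f=0$ is needed on $f$.

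I would view $T=\bar\partial$ as a densely defined closed unbounded operator from $L^2(U,e^{-\varphi})$ to itself (identifying $(0,1)$-forms with functions in one variable), and then appeal to the standard Hahn--Banach/Riesz representation scheme: solvability of $Tu=f$ with the claimed norm bound is equivalent to the a priori inequality
$$
\bigl|\langle g,f\rangle_\varphi\bigr|^2 \;\leq\; \|T^*g\|_\varphi^{2}\cdot \int_U |f|^2(\partial_z\partial_{\bar z}\varphi)^{-1}\,e^{-\varphi}\,dA, \qquad g\in\mathrm{Dom}(T^*).
$$
This is in turn a consequence of the one-variable Kohn--Morrey identity. For $g\in C_c^\infty(U)$ integration by parts gives $T^*g=-\partial_z g+g\,\partial_z\varphi$, and a further integration by parts on the cross term in $\|T^*g\|_\varphi^2$ yields
$$
\|T^*g\|_\varphi^2=\int_U|\partial_z g|^2\,e^{-\varphi}\,dA+\int_U|g|^2\,\partial_z\partial_{\bar z}\varphi\cdot e^{-\varphi}\,dA,
$$
all boundary contributions vanishing since $\supp g$ is compact. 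Dropping the first nonnegative term and applying Cauchy--Schwarz to $\langle g,f\rangle_\varphi$, after splitting $e^{-\varphi}=(\partial\bar\partial\varphi)^{1/2}e^{-\varphi/2}\cdot(\partial\bar\partial\varphi)^{-1/2}e^{-\varphi/2}$, produces the required bound.

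The main obstacle is passing from $C_c^\infty(U)$ to the full domain of $T^*$ and from smoothly bounded subdomains to a general $U$. I would follow Hörmander's original scheme \cite{Ho}: establish a Friedrichs-type density/mollification lemma showing that $C_c^\infty$ is dense in $\mathrm{Dom}(T^*)$ for the graph norm, then exhaust $U$ by relatively compact smooth subdomains $U_j\Subset U$, solve the equation on each $U_j$ with a uniform weighted bound on $u_j$, and pass to a weak $L^2(U,e^{-\varphi})$-limit to recover $u$ on $U$ with the announced estimate. In one complex variable this last step is particularly clean because the weight $\varphi=a\log(1+|z|^2)$ is strictly subharmonic on all of $\C$ with $\partial\bar\partial\varphi>0$ everywhere, so no auxiliary twist weight $\psi_j\searrow 0$ is needed to make the weighted space complete, and the constant $a^{-1}$ obtained in the final estimate is sharp.
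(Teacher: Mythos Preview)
The paper does not prove this theorem at all: it is quoted verbatim as a black box from H\"ormander's book \cite[Thm 4.2.1]{Ho} and then immediately applied with $U=\Omega$, $f=v$, $a=2$. So there is no ``paper's own proof'' to compare your attempt against.

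Your sketch is a correct outline of the standard H\"ormander proof. The computation $\partial_z\partial_{\bar z}\bigl(a\log(1+|z|^2)\bigr)=a(1+|z|^2)^{-2}$ is right, and it does reduce the stated inequality to the generic form $\int|u|^2e^{-\varphi}\le\int|f|^2(\partial_z\partial_{\bar z}\varphi)^{-1}e^{-\varphi}$. The one-variable Kohn--Morrey identity you wrote for $g\in C_c^\infty(U)$ is correct, and the Cauchy--Schwarz step is the usual one. The part you flag as ``the main obstacle'' --- density of $C_c^\infty$ in $\mathrm{Dom}(T^*)$ for the graph norm and the exhaustion argument --- is indeed where the real work lies, and you correctly defer it to H\"ormander's original treatment rather than reproducing it. Since the paper itself simply cites the result, your write-up actually contains \emph{more} justification than the paper provides; for the purposes of this paper the statement is used off the shelf and no proof is expected.
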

We now apply this theorem with $U=\Omega$, $f=v$ and $a=2$. More precisely, since
$\varphi_0\in A^2\left(D, |(z-z_0)(z-z_1)|^{-2}\right)$ and in view of condition 3.\ of Lemma \ref{lemma1}, we see that $v\in L^2(\Omega)$. Then H\"ormander's theorem yields a function $u\in L^2(\Omega,(1+|z|^2)^{-2})$ solving the $\overline{\partial}$-problem. Observe that
this also implies that $u/P\in L^2(\Omega)$. Define now $\varphi_1=h_1-u$ on $\Omega_1$ and $\varphi_2=h_2-u$ on $\Omega_2$. These are holomorphic functions since
$$
 \frac{\partial \varphi_i}{\partial\overline{z}}=\frac{\partial h_i}{\partial\overline{z}}-\frac{\partial u}{\partial\overline{z}}
 =v-\frac{\partial u}{\partial\overline{z}}=0, \quad \text{on}\ \Omega_i.
$$

Now on $D$,
$$
 P{\varphi}=\varphi_0=\varphi_1 - \varphi_2,
$$
so that
$$
 \varphi=\frac{\varphi_1}{P}-\frac{\varphi_2}{P}=\frac{h_1-u}{P}-\frac{h_2-u}{P}=:f_1+f_2,
$$
where the function $f_i=\frac{h_i-u}{P}$ are holomorphic on $\Omega_i$.
Recall also that 
$h_i\in L^2(D)$ and $h_i$ extends trivially to $\Omega_i\setminus D$, so that $h_i/P\in 
L^2(\Omega_i)$. Since $u/P\in L^2(\Omega)$, we thus get $f_i\in A^2(\Omega_i)$
which completes the proof of Theorem \ref{thm4}
\qedsymbol

\begin{figure}[h]
\begin{center}
\begin{tikzpicture}[scale=0.7]
\definecolor{ptc}{rgb}{0,0.3,0.9}
\definecolor{gc}{rgb}{0.53,0.27,0.73}
\definecolor{sec}{rgb}{0.87, 0.27,0.4}
\definecolor{curves}{rgb}{0.95, 0.1, 0.1}
\fill[line width=2.pt,fill=blue, fill opacity=0.5] (3.141592653589793-4,4) -- (3.141592653589793,0) -- (3.141592653589793-4,-4)--cycle;
\fill[line width=2.pt,fill=sec, fill opacity=0.5] (4,4) -- (0,0) -- (4,-4)--cycle;
\draw [samples=50,rotate around={-270.:(2.1876466018629817,0.)},xshift=2.1876466018629817cm,yshift=0.cm,line width=2.pt,domain=-1.570796326794897:1.570796326794897, color=curves] plot (\x,{(\x)^2/2/2.0});
\draw [samples=50,rotate around={-90.:(0.9539460517268117,0.)},xshift=0.9539460517268117cm,yshift=0.cm,line width=2.pt,domain=-1.570796326794897:1.570796326794897, color=curves] plot (\x,{(\x)^2/2/2.0});
\draw [line width=1.pt,color=black,domain=0:4, color=black] plot(\x,{(\x)});
\draw [line width=1.pt,color=black,domain=0:4, color=black] plot(\x,{(-\x)});
\draw [line width=1.pt,color=black,domain=-(4-3.141592653589793):3.141592653589793] plot(\x,{3.141592653589793 -\x});
\draw [line width=1.pt,color=black,domain=-(4-3.141592653589793):3.141592653589793] plot(\x,{\x - 3.141592653589793});
\draw [line width=0.5pt,domain=-4.3:7.3] plot(\x,{(--1.5707963267948966-0.*\x)/1.});
\draw [line width=1.pt,domain=-4.3:7.3] plot(\x,{(-1.5707963267948966-0.*\x)/1.});
\draw[color=black] (3.5,2) node[xscale=1, yscale=1] {$1$};
\draw[color=black] (3.5,0) node[xscale=1, yscale=1] {$1$};
\draw[color=black] (3.5,-2) node[xscale=1, yscale=1] {$1$};
\draw[color=black] (-0.5,2) node[xscale=1, yscale=1] {$0$};
\draw[color=black] (-0.5,0) node[xscale=1, yscale=1] {$0$};
\draw[color=black] (-0.5,-2) node[xscale=1, yscale=1] {$0$};
\draw[color=black] (0.6,0) node[xscale=1, yscale=1] {$0$};
\draw[color=black] (2.5,0) node[xscale=1, yscale=1] {$1$};
\end{tikzpicture}
\caption{\label{figure4} Values of $\chi_1$ on $\Omega$, and
the curves $x=\pm \alpha(y) + \pi/2$.}
\end{center}
\end{figure}
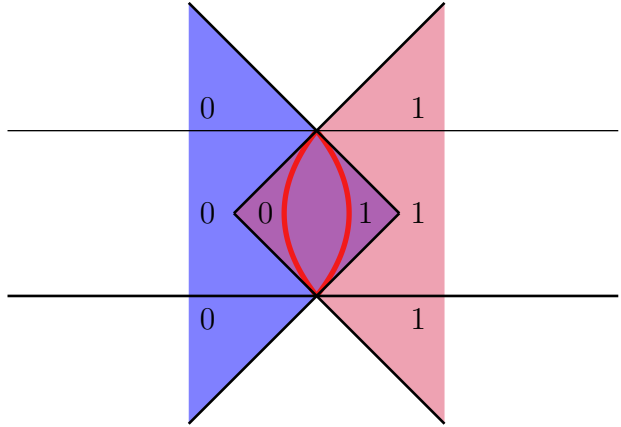

\begin{rmk}\label{rem3.8}
Condition 3. of Lemma \ref{lemma1} is optimal in the sense that 
$$\limsup_{z\in D,z\to z_i}
\left\lvert(z-z_i)\frac{\partial \chi_1}{\partial \bar{z}}(z) \right\rvert>0,\quad i=0,1,
$$ 
for any arbitrary  partition of unity for $\{\Omega_1,\Omega_2\}$.
This is an easy consequence of the mean value theorem.
So, it does not seem possible to improve the result examinating further the First Cousin
Problem here. 


\end{rmk}

\bibliographystyle{alpha} 
\bibliography{biblio} 

\end{document}